\def\uno{\begin{figure}[htb]\begin{center}\includegraphics[width=10cm]{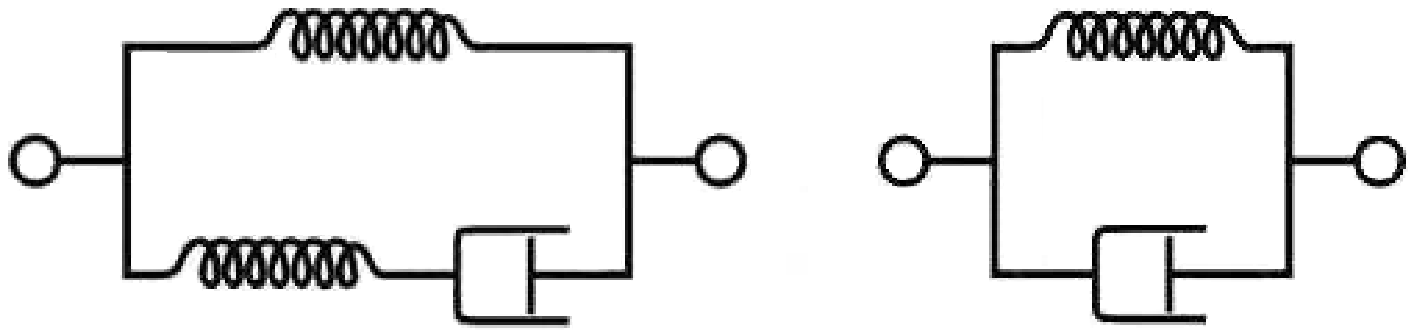}\\
{\tiny fig.\ $\!$1$\,\,$ Mechanical schemes of the standard viscoelastic solid model (left) and
the Kelvin-Voigt model (right)}
\end{center}\end{figure}}
\def\due{\begin{figure}[htb]\begin{center}\includegraphics[width=5cm]{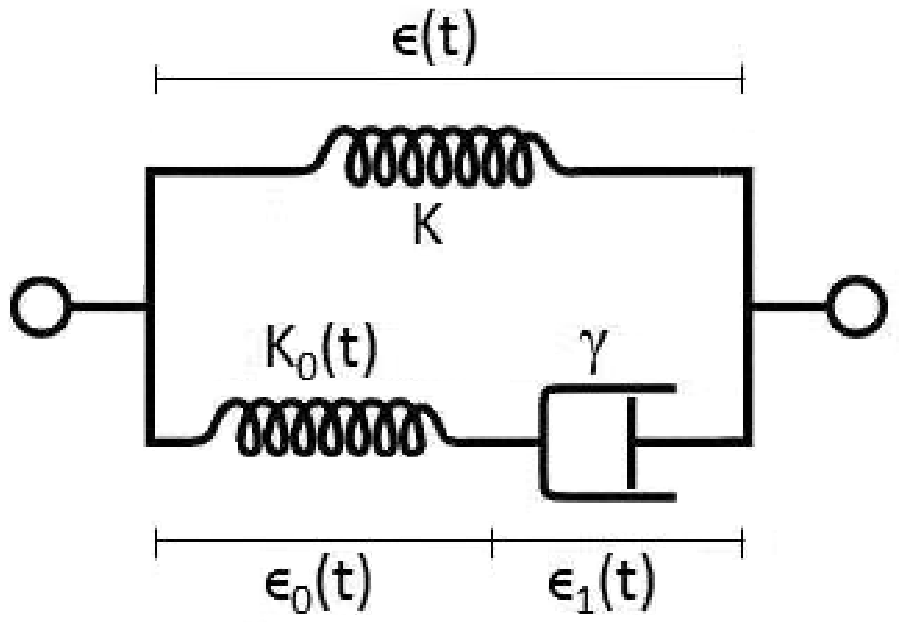}\\
{\tiny fig.\ $\!$2$\,\,$ Mechanical scheme of an aging standard viscoelastic solid}
\end{center}\end{figure}}
\def\eps {\varepsilon}
\def\d {{\rm d}}
\def\e {{\rm e}}
\def\R {\mathbb{R}}
\def\N {\mathbb{N}}
\def\H {{\mathcal H}}
\def\h {{\rm H}}
\def\C {{\mathcal C}}
\def\D {{\mathfrak D}}
\def\Q {{\mathcal Q}}
\def\EE {{\mathcal E}}
\def\T {{\mathbb T}}
\def\L {{\mathcal L}}
\def\M {{\mathcal M}}
\def \l {\langle}
\def \r {\rangle}
\def\ddt{\frac{\d}{\d t}}
\def \and {{\qquad\text{and}\qquad}}
\def\pt{\partial_t}
\def\ps{\partial_s}
\def\ptt{\partial_{tt}}
\def\sp {\text{span}}
\newtheorem{proposition}{Proposition}[section]
\newtheorem{theorem}[proposition]{Theorem}
\newtheorem{corollary}[proposition]{Corollary}
\newtheorem{lemma}[proposition]{Lemma}
\theoremstyle{definition}
\newtheorem{definition}[proposition]{Definition}
\newtheorem{remark}[proposition]{Remark}
\numberwithin{equation}{section}
\def \au {\rm}
\def \ti {\it}
\def \jou {\rm}
\def \bk {\it}
\def \no#1#2#3 {{\bf #1} (#3), #2.}
\def \eds#1#2#3 {#1, #2, #3.}
\title[Viscoelasticity with time-dependent memory kernels]
{A model of viscoelasticity\\
with time-dependent memory kernels}
\author[M. Conti, V. Danese, C. Giorgi and V. Pata]
{Monica Conti, Valeria Danese, Claudio Giorgi and Vittorino Pata}
\address{Politecnico di Milano - Dipartimento di Matematica
\newline\indent
Via Bonardi 9, 20133 Milano, Italy}
\email{monica.conti@polimi.it {\rm (M. Conti)}}
\email{valeria.danese@polimi.it {\rm (V. Danese)}}
\email{vittorino.pata@polimi.it {\rm (V. Pata)}}
\address{Universit\`a di Brescia - DICATAM
\newline\indent
Via Valotti 9, 25133 Brescia, Italy}
\email{giorgi@ing.unibs.it {\rm (C.\ Giorgi)}}
\subjclass[2000]{35A05, 45K05, 73E50, 74D99}
\keywords{Viscoelasticity, Kelvin-Voigt model, memory, time-dependent kernels, existence and uniqueness of solutions}
\begin{document}

\begin{abstract}
We consider the model equation arising in the theory of viscoelasticity
$$
\ptt u-h_t(0)\Delta u
-\int_{0}^\infty h_t'(s)\Delta u(t-s)\d s+ f(u) = g.
$$
Here, the main feature is that the memory kernel $h_t(\cdot)$ depends on time, allowing for instance to
describe the dynamics of aging materials. From the mathematical viewpoint, this translates into the study of dynamical systems
acting on time-dependent spaces, according to the newly established theory of Di Plinio {\it et al.}\ \cite{oscillon}.
In this first work, we give a proper notion of solution, and we provide a global well-posedness result.
The techniques naturally extend to the analysis of the longterm behavior of the associated process,
and can be exported to cover the case of general systems with memory in presence of time-dependent kernels.
\end{abstract}

\maketitle
%%%%%%%%%%%%%%%%%%%%%%%%%%%%%%%%%%%%%%%%%%%%%%%%%%%%%%%%%%%%%%%%%%%%%%

%%%%%%%%%%%%%%%%%%%%%%%%%%%%%%%%%%%%%%%%%%%%%%%%%%%%%%%%%%%%%%%%%%%%%%
\section{Introduction}

\noindent
Let $\Omega \subset \R^3$ be a bounded domain with smooth boundary
$\partial\Omega$.
For any given $\tau \in \R$, we consider for $t>\tau$ the evolution equation
arising in the theory of uniaxial deformations in isothermal viscoelasticity (see e.g.\ \cite{CHR,FM,RHN})
\begin{equation}
\label{eqn-mem-h}
\ptt u-h(0)\Delta u
-\int_{0}^\infty h'(s)\Delta u(t-s)\d s+ f(u) = g,
\end{equation}
subject to the homogeneous Dirichlet boundary condition
\begin{equation}
\label{bdryC}
u(t)_{|\partial \Omega} = 0.
\end{equation}
The unknown variable
$u = u(x, t): \Omega \times \R \to \R$ describes
the {\it axial displacement field} relative to the reference
configuration of a viscoelastic body occupying the volume
$\Omega$ at rest, and is interpreted as an initial datum for $t\leq\tau$,
where it need not solve the equation.
Here, $f:\R \to \R$ is a nonlinear term,
$g=g(x): \Omega\to \R$ an external force, and
the convolution (or memory) kernel $h$
is a function of the form
$$h(s)=k(s)+k_\infty,$$
where $k$ is a (nonnegative) convex summable function. The values $h(0)>k_\infty>0$
represent the {\it instantaneous elastic modulus},
and the {\it relaxation modulus} of the material, respectively.
Since $h'=k'$, a formal integration by parts yields
$$\int_{0}^\infty h'(s)\Delta u(t-s)\d s
=-k(0)\Delta u(t) + \int_0^\infty k(s)\Delta\pt u(t-s)\d s,$$
so that \eqref{eqn-mem-h} can be rewritten as
\begin{equation}
\label{eqn-mem-c}
\ptt u-k_\infty\Delta u
-\int_{0}^\infty k(s)\Delta\pt u(t-s)\d s+ f(u) = g.
\end{equation}

A simplified, yet very effective, way to represent linear viscoelastic materials is through
rheological models, that is, by considering combinations of linear elastic springs and viscous dashpots.
In particular,
a standard viscoelastic solid is modeled as a Maxwell element, i.e.\ a Hookean spring and a Newtonian dashpot sequentially connected,
which is in parallel with a lone spring.
The resulting memory kernel turns out to be of exponential type.
In this context, the aging of the material corresponds to a change of the physical parameters
along the time leading, possibly, to a different shape of the memory kernel.
There are several ways to reproduce this phenomenon
within a rheological framework (see e.g.\ \cite{Droz}). Here, we propose to describe
aging as a deterioration of the elastic response of the viscoelastic solid,
translating into a progressive stiffening of the spring in the Maxwell element.
In the limiting situation, when the spring
becomes completely rigid, the outcome is the Kelvin-Voigt (solid) model,
depicted by a damper and an elastic spring connected in parallel.

\uno

Mathematically speaking, the Kelvin-Voigt model can be obtained from \eqref{eqn-mem-c}
by keeping fixed the total mass of the kernel $k$, that is,
$$\int_0^\infty k(s)\d s=m,$$
and letting $h(0)\to\infty$. Or, in other words, by taking the ``limit"
$$k(s)\to m\delta_0(s),$$
where $\delta_0$ is the Dirac mass at $0^+$.
This leads to the equation
\begin{equation}
\label{eqn-KV}
\ptt u - k_\infty\Delta u - m\Delta \pt u + f(u) = g.
\end{equation}
In the terminology of Dautray and Lions \cite{DOLI}, this is the passage from viscoelasticity with {\it long memory}
to viscoelasticity with {\it short memory}.

In spite of a relatively vast literature concerning both \eqref{eqn-mem-h} and \eqref{eqn-KV}
(see e.g.\ \cite{CACH1,CACH2,COPA,DAF,FALA,GHMA,GILA,KAL,LIZH,MAS,MRIV,PASQ,PAZE,WE} and references therein),
we are not aware of analytic studies which consider the possibility
of including aging phenomena (or, more generally, changes of the structural properties)
of the material {\it within} the dynamics.
Thus, from our point of view, it is of great interest to have a model
whose physical parameters can evolve over time. This would allow, for instance, to describe the transition from
long to short memory of a given viscoelastic material.

The way to pursue this goal is to let the memory kernel $h$ depend itself on time.
Accordingly, we will consider a modified version of \eqref{eqn-mem-h},
namely,
\begin{equation}
\label{eqn-mem-ht}
\ptt u-h_t(0)\Delta u
-\int_{0}^\infty h_t'(s)\Delta u(t-s)\d s+ f(u) = g,
\end{equation}
subject to the boundary condition \eqref{bdryC}, with
$$h_t(s)=k_t(s)+k_\infty,\quad k_\infty>0,$$
where the time-dependent function $k_t(\cdot)$
is convex and summable for every fixed $t$.
Here and in what follows, the {\it prime} denotes the partial derivative with respect to $s$.
It is worth noting that the nonautonomous character of~\eqref{eqn-mem-ht} is structural, in the sense
that the leading differential operator depends explicitly on time. A much different situation
than, say, having a time-dependent external force.
The equation is supplemented with the initial conditions
\begin{equation}
\label{in-cond-u}
\begin{cases}
u(\tau) = u_\tau,\\
\pt u(\tau) = v_\tau,\\
u(\tau-s) = \phi_\tau(s),\quad s>0,
\end{cases}
\end{equation}
where $u_\tau$, $v_\tau$ and the function $\phi_\tau$ are assigned data.

In order to study the initial-boundary value problem above, following the pioneering idea of Dafermos \cite{DAF,DAF2}
we introduce for $t \geq \tau$ the {\it past history} variable
$$
\eta^t(s) = u(t) - u(t-s), \quad s >0.
$$
Besides, aiming to incorporate the boundary conditions,
we consider the
strictly positive linear operator $A=-\Delta$ on the Hilbert space $L^2(\Omega)$
of square summable functions on $\Omega$, with domain
$$\D(A)=H^2(\Omega)\cap H_0^1(\Omega),$$
where $H_0^1(\Omega)$ and $H^2(\Omega)$ denote the usual Sobolev spaces.
Then, calling
$$\mu_t(s) = -k'_t(s)=-h'_t(s),$$
and setting for simplicity the constant $k_\infty=1$, problem \eqref {eqn-mem-ht}
with the Dirichlet boundary condition \eqref{bdryC} reads
\begin{equation}
\label{eqn-mem}
\ptt u+A u +\int_0^\infty \mu_t(s)A\eta(s)\d s + f(u) = g.
\end{equation}
Denoting
$$\eta_\tau(s) = u_\tau - \phi_{\tau}(s),$$
in view of \eqref{in-cond-u} it is readily seen that,
for every $t \geq \tau$,
\begin{equation}
\label{ETA}
\eta^t(s) =
\begin{cases}
u(t) - u(t-s),                     &s \leq t- \tau,\\
\eta_{\tau}(s-t+\tau) + u(t) - u_\tau, &s > t -\tau.
\end{cases}
\end{equation}
Accordingly, viewing the original problem as the evolution
system \eqref{eqn-mem}-\eqref{ETA} in the variables $u(t)$ and $\eta^t$,
the initial conditions \eqref{in-cond-u} turn into
\begin{equation}
\label{in-cond}
\begin{cases}
u(\tau) = u_\tau, \\
\pt u(\tau) = v_\tau, \\
\eta^\tau = \eta_\tau.
\end{cases}
\end{equation}

The focus of this paper is a global well-posedness result
for problem \eqref{eqn-mem}-\eqref{in-cond} in a suitable functional space. From the mathematical
point of view, the presence of a time-dependent kernel introduces essential difficulties, and new ideas are needed.
Indeed, in the classical Dafermos scheme,
one has a supplementary differential equation ruling the evolution
of the variable $\eta$, generated by the right-translation semigroup
on the history space, whose mild solution is given by~\eqref{ETA}.
But in our case, the natural phase space for the past history is itself time-dependent, suggesting that the right strategy is
to work within the theory of processes on time-dependent spaces $\H_t$,
recently devised by Di Plinio {\it et al.}\ \cite{oscillon}, and further developed in \cite{calimero,katilina,CPT,oscillon2}.
Still, in those papers the time dependence entered only via the definition of the norm
in a universal reference space, i.e.\ the spaces $\H_t$ are in fact the {\it same} linear space
endowed with different norms, all equivalent for $t$ running in compact sets. On the contrary,
here the phase space $\H_t$ depends on time at a {\it geometric} level, and we only have a set inclusion $\H_\tau\subset\H_t$
as $\tau\leq t$.
This
poses some problems even in the definition of the time derivative $\pt\eta$. To overcome this obstacle, we propose
a different notion of solution (which boils down to the usual one when the memory kernel
is time-independent), where the evolution of $\eta$ is actually postulated via the representation formula~\eqref{ETA}.
At the same time, this prevents us to obtain directly differential inequalities,
essential to produce any kind of energy estimates, so that
the main technical tool in our approach turns out to be a family of integral inequalities, which are
obtained by several approximation steps.

The theory, along with the techniques developed in this work, open the way to the longterm analysis of the solutions,
which will be the object of future works. Besides, a paradigm is set
in order to tackle any equation of memory type with time-dependent kernels.
It is worth mentioning also the possibility of extending in a quite natural way the underlying ideas
to the study of systems with memory in the so-called minimal state framework introduced in~\cite{FGP}.
\medskip

%%%%%%%%%%%%%%%%%%%%%%%%%%%%%%%%%%%%%%%%%%%%%%%%%%%%%%%%%%%%%%%%
\subsection*{Outline of the paper}

In Section 2 we stipulate our assumptions on the time-dependent memory kernel, showing a concrete example
of physical relevance, while in Section 3 we introduce the proper functional spaces. The
global well-posedness result is stated in Section 4.
The main technical tool needed in our analysis is discussed in Section 5, and
the remaining of the paper is devoted to the proofs: existence of solutions (Section 6), uniqueness (Section 7)
and further regularity (Section 8). In the final Appendix we provide a physical
derivation of our equation via a rheological model for aging materials.

%%%%%%%%%%%%%%%%%%%%%%%%%%%%%%%%%%%%%%%%%%%%%%%%%%%%%%%%%%%%%%%%
\subsection*{Notation}

For $\sigma \in \R$, we define the compactly nested Hilbert spaces
$$\h^\sigma = \D(A^{\sigma/2}),$$
endowed with the inner products and norms
$$\l u,v\r_{\sigma}
=\l A^{\sigma/2}u,A^{\sigma/2}v\r_{L^2(\Omega)}
\and \|u\|_{\sigma}
=\|A^{\sigma/2}u\|_{L^2(\Omega)}.$$
The index $\sigma$ will be always omitted when equal to zero.
For $\sigma>0$, it is understood that $\h^{-\sigma}$
denotes the completion of the domain, so that $\h^{-\sigma}$ is the dual space of $\h^\sigma$.
The symbol $\l\cdot,\cdot\r$ will also be used to denote the duality pairing between
$\h^{-\sigma}$ and $\h^\sigma$.
In particular,
$$\h^{-1}=H^{-1}(\Omega),\quad \h=L^2(\Omega),\quad
\h^1=H_0^1(\Omega), \quad
\h^2=H^2(\Omega)\cap H_0^1(\Omega).$$
Along the paper, we will repeatedly use without explicit mention the Young, H\"older and
Poincar\'e inequalities,
as well as the standard Sobolev embeddings, e.g.\  $\h^1\subset L^6(\Omega)$.
%%%%%%%%%%%%%%%%%%%%%%%%%%%%%%%%%%%%%%%%%%%%%%%%%

%%%%%%%%%%%%%%%%%%%%%%%%%%%%%%%%%%%%%%%%%%%%%%%%%
\section{The Time-Dependent Memory Kernel}

\subsection{General assumptions}
\label{first-ker}
In order to prove a well-posedness result for our problem,
we suppose that the function
$$(t,s)\mapsto\mu_t(s):\R\times\R^+\to\R^+$$
satisfies the following set of assumptions, where $\R^+=(0,\infty)$ and
we agree to denote
$$\dot\mu_t(s)=\pt\mu_t(s)\and \mu'_t(s)=\ps\mu_t(s),$$
whenever such derivatives exist.

\smallskip
\begin{itemize}
\item[{\bf (M1)}]
For every fixed $t\in\R$, the map
$s\mapsto\mu_t(s)$ is nonincreasing,
absolutely continuous and summable.

\smallskip
\item[{\bf (M2)}]
For every $\tau\in\R$ there exists a function $K_\tau:[\tau,\infty)\to\R^+$,
summable on any interval $[\tau,T]$,
such that
$$
\mu_t(s) \leq K_\tau(t)\mu_\tau(s)
$$
for every $t\geq\tau$ and every $s>0$.

\smallskip
\item[{\bf (M3)}]
For almost every fixed $s>0$,
the map $t\mapsto\mu_t(s)$ is differentiable for all $t\in\R$.
Besides,
$$(t,s)\mapsto \mu_t(s)\in L^\infty({\mathcal K})\and (t,s)\mapsto \dot\mu_t(s)\in L^\infty({\mathcal K})$$
for every compact set ${\mathcal K}\subset \R\times\R^+$.

\smallskip
\item[{\bf (M4)}]
There exists a function $M:\R\to\R^+$,
bounded on bounded intervals, such that
$$
\dot\mu_t(s)+\mu_t'(s)\leq M(t)\mu_t(s)
$$
for every $t\in\R$ and almost every $s>0$.
\end{itemize}

\medskip
Here are some immediate consequences of the assumptions. First,
due to {\bf (M1)}, the function $s\mapsto\mu_t(s)$ is differentiable almost
everywhere and, for every $t\in\R$,
$$
\mu_t'(s)\leq 0,\quad\text{for a.e. }s>0.
$$
Note that $s\mapsto\mu_t(s)$ can be
possibly unbounded in a neighborhood of zero. Besides,
denoting the total mass of $\mu_t$ by
$$\kappa(t) = \int_0^\infty \mu_t(s)\d s,
$$ from {\bf (M2)} we readily see that
\begin{equation}
\label{kappa-t-tau}
\kappa(t) \leq K_\tau(t)\kappa(\tau),
\quad\forall t\geq\tau.
\end{equation}

\begin{remark}
In this work we are mainly concerned with kernels that do not vanish on $\R^+$, modeling
the so-called infinite delay case. However, our analysis applies as well (and with no changes in the proofs)
to the finite delay case, namely, when
$$s_\infty(t)=\sup\{s>0:\, \mu_t(s)>0\}<\infty.$$
In this case, in comply with {\bf (M2)}, note that $s_\infty(t)$ is a nonincreasing function of $t$.
\end{remark}

\subsection{A concrete example: the rescaled kernel}
An enlightening example of the kind of kernel we have in mind is obtained by a
suitable rescaling  of a (nonnegative) nonincreasing
function $\mu\in \C^1(\R^+)\cap L^1(\R^+)$.
More precisely, given  $\eps\in\C^1(\R,\R^+)$ satisfying
$$
\dot\eps(t)\leq 0, \quad\forall t\in\R,
$$
we define
$$\mu_t(s)
=\frac{1}{[\eps(t)]^2}\,\mu\bigg(\frac{s}{\eps(t)}\bigg).$$

\begin{remark}
With reference to \eqref{eqn-mem-c},
defining
$$k(s)=\int_s^\infty \mu(y)\d y,$$
we have
$$k_t(s)=\frac{1}{\eps(t)}\,k\bigg(\frac{s}{\eps(t)}\bigg).$$
In particular, if we assume that $s\mapsto s\mu(s)\in L^1(\R^+)$, we get
$$
\int_0^\infty k_t(s)\d s=\int_0^\infty k(s)\d s=\int_0^\infty s\mu(s)\d s=m<\infty.
$$
Accordingly, if $\eps(t)\to 0$ as $t\to \infty$, we obtain the distributional convergence
$$\lim_{ t\to \infty}k_t= m \delta_0.$$
\end{remark}

We now verify that such a $\mu_t$ complies with the assumptions above.
We begin to write explicitly the derivatives of $\mu_t$, namely,
$$
\mu'_t(s)
=\frac{1}{[\eps(t)]^3}\mu'\bigg(\frac{s}{\eps(t)}\bigg),
$$
and
$$
\dot\mu_t(s)
=-\frac{\dot\eps(t)}{\eps(t)}\,\big[2\mu_t(s)+s\mu'_t(s)\big].
$$

\noindent
\medskip
$\bullet$ Assumptions {\bf (M1)} and {\bf (M3)} are obviously verified. In particular,
we have that
$$\kappa(t)=
\int_0^\infty\mu_t(s)\d s
=\frac{1}{\eps(t)}\int_0^\infty\mu(s)\d s<\infty.$$

\noindent
\smallskip
$\bullet$ Assumption {\bf (M2)} holds with
$$K_\tau(t)= \bigg[\frac{\eps(\tau)}{\eps(t)}\bigg]^2.$$
This easily follows from the fact that both $\mu$ and $\eps$ are nonincreasing
in the respective arguments.

\noindent
\medskip
$\bullet$ Assumption {\bf (M4)} holds with
$$M(t)=-2\,\frac{\dot\eps(t)}{\eps(t)}.$$
Indeed,
$$
\dot\mu_t(s)+\mu_t'(s)=\Big[1-s\frac{\dot\eps(t)}{\eps(t)}\Big]\mu'_t(s)-2\,\frac{\dot\eps(t)}{\eps(t)}\mu_t(s).
$$
Since $\dot\eps\leq 0$ and $\mu'_t\leq 0$, we obtain the desired inequality.

\begin{remark}
The typical (and physically relevant) example is obtained by taking
$$\mu(s)=k(s)=\e^{-s},$$
in which case
$$k_t(s)=\frac{1}{\eps(t)}\,\e^{-\frac{s}{\eps(t)}}
\and
\mu_t(s)=\frac{1}{[\eps(t)]^2}\,\e^{-\frac{s}{\eps(t)}}.$$
\end{remark}
%%%%%%%%%%%%%%%%%%%%%%%%%%%%%%%%%%%%%%%%%%%%%%%%%%%%%%%%%%%%%%%%

%%%%%%%%%%%%%%%%%%%%%%%%%%%%%%%%%%%%%%%%%%%%%%%%%%%%%%%%%%%%%%%%
\section{Time-Dependent Memory Spaces}

\noindent
Let $\sigma\in\R$ and $\tau\in\R$ be arbitrarily fixed.
For every $t\geq\tau$, we introduce the memory spaces
$$\M_t^{\sigma} = L^2_{\mu_t}(\R^+; \h^{\sigma+1}),$$
equipped with the weighted $L^2$-inner products
$$\l\eta,\xi\r_{\M_t^{\sigma}}
=\int_0^\infty \mu_t(s)\l\eta(s),\xi(s)\r_{\sigma+1}\d s.$$
Owing to {\bf (M2)}, for every $\eta \in \M_\tau^{\sigma}$ we have
\begin{equation}
\label{norme-Mt-tau}
\|\eta\|^2_{\M_t^{\sigma}}
\leq K_\tau(t)\|\eta\|^2_{\M_\tau^{\sigma}},
\quad\forall t \geq \tau,
\end{equation}
providing the continuous embedding
$$\M_\tau^{\sigma} \subset \M_t^{\sigma},\quad\forall t\geq\tau.$$
We will also consider the linear operator
$$\T_t: \D(\T_t)\subset \M_t^\sigma \to \M_t^\sigma,$$
acting as
$$\T_t\eta=-\eta',$$
the {\it prime} standing for weak derivative,
with domain
$$\D(\T_{t})=\Big\{\eta\in{\M^\sigma_t}:\,
\eta'\in\M^\sigma_t,\,\,
\lim_{s\to 0}\eta(s)=0\,\,\,\text{in}\,\,\, \h^{\sigma+1}\Big\}.$$
It is well known (see e.g.\ \cite{Terreni}) that $\T_t$ is
the infinitesimal generator of the contraction semigroup of right-translations on the space
$\M_t^\sigma$,  hence a dissipative operator. More precisely, we have the estimate
\begin{equation}
\label{T-diss}
\l \T_t\eta, \eta\r_{\M_t^{\sigma}}
= \frac12\int_0^\infty \mu'_t(s) \|\eta(s)\|^2_{\sigma+1}\d s,\quad\forall \eta\in\D(\T_t),
\end{equation}
which by {\bf (M1)} readily yields
$$\l \T_t\eta, \eta\r_{\M_t^{\sigma}}\leq 0,\quad\forall \eta\in\D(\T_t).$$
Due to \eqref{norme-Mt-tau}, we also observe that
\begin{equation}
\label{inkp}
\T_t \supset \T_\tau, \quad\forall t \geq \tau.
\end{equation}
In fact, the operators $\{\T_t\}_{t\geq\tau}$ are
increasingly nested extensions of each other.
Finally, we define the {\it extended memory spaces}
$$\H^\sigma_t = \h^{\sigma+1} \times \h^\sigma \times \M_t^{\sigma},$$
with the usual product norm
$$\|(u,v,\eta)\|_{\H^\sigma_t}^2=\|u\|_{\sigma+1}^2+\|v\|_\sigma^2+\|\eta\|_{\M_t^{\sigma}}^2.$$
Again, the subscript $\sigma$ is omitted whenever zero.
%%%%%%%%%%%%%%%%%%%%%%%%%%%%%%%%%%%%%%%%%%%%%%%%%%%%%%%%%%%%%%%%%%%%%%%%%%%%

%%%%%%%%%%%%%%%%%%%%%%%%%%%%%%%%%%%%%%%%%%%%%%%%%%%%%%%%%%%%%%%%%%%%%%%%%%%%
\section{Statement of the Result}
\label{sec-mainresults}

\noindent
In this section, we give the definition of a (weak) solution to our problem, and we state the
main existence and uniqueness result.
We first stipulate the assumptions on the external force $g$ and on the nonlinearity $f$.

\smallskip
\noindent
$\bullet$ Let $g\in \h$.

\smallskip
\noindent
$\bullet$ Let  $f\in\C^1(\R)$, with $f(0)=0$, satisfy the growth restriction
\begin{equation}
\label{hp1-f}
|f'(u)| \leq C\big(1+|u|^2),
\end{equation}
for some $C\geq 0$, along with the dissipation condition
\begin{equation}
\label{phi1}
\liminf_{|u|\to\infty}\frac{f(u)}{u}>-\lambda_1,
\end{equation}
$\lambda_1>0$ being the first eigenvalue of $A$.

\begin{definition}
\label{def-sol}
Let $T>\tau \in \R$, and let $z_\tau=(u_\tau,v_\tau,\eta_\tau)\in\H_\tau$ be a fixed vector.
A function
$$z(t) = (u(t), \pt u(t), \eta^t)\in\H_t \quad\text{for a.e. } t\in [\tau,T]$$
is a solution
to problem \eqref{eqn-mem}-\eqref{in-cond}
on the time-interval $[\tau, T]$ with initial datum  $z_\tau$ if:
\smallskip
\begin{itemize}
\item[{\rm (i)}] $u \in L^\infty(\tau, T; \h^1)$, $\,\pt u\in  L^\infty(\tau, T; \h)$, $\,\ptt u\in  L^1(\tau, T; \h^{-1})$.
\smallskip
\item[{\rm (ii)}] $u(\tau)=u_\tau$, $\,\pt u(\tau)=v_\tau$.
\smallskip
\item[{\rm (iii)}] The function $\eta$ fulfills the representation formula \eqref{ETA}.
\smallskip
\item[{\rm (iv)}] For every test function $\varphi \in \h^1$ and almost every $t \in [\tau,T]$,
$$
\l \ptt u(t),\varphi\r + \l u(t),\varphi\r_1
+ \int_0^\infty \mu_t(s)\l \eta^t(s), \varphi\r_1 \d s
+ \l f(u(t)),\varphi \r = \l g,\varphi\r.
$$
\end{itemize}
\end{definition}

\begin{remark}
As it will be shown in the proofs, the facts that $u_\tau\in\h^1$, $\eta_\tau\in\M_\tau$
and $u \in L^\infty(\tau, T; \h^1)$ are enough to guarantee that
$\eta^t$ given by~\eqref{ETA} belongs to $\M_t$ for almost every $t\in[\tau,T]$.
\end{remark}

\begin{remark}
By means of standard embeddings (see e.g\ \cite[\S5.9]{Evans}), point (i) of
the definition yields at once
$$
u \in \C([\tau, T], \h) \cap \C^1([\tau, T], \h^{-1}).
$$
Thus, speaking of the initial values of $u$ and $\pt u$ makes sense.
\end{remark}

\begin{remark}
As already mentioned in the Introduction,
it is worth noting that the definition above, where the representation formula \eqref{ETA}
is actually postulated,
is applicable as well to classical systems with memory (i.e.\ in presence of time-independent kernels),
providing a notion of solution completely equivalent to the usual one (see e.g.\ \cite{COPA,PZ}).
In fact, this approach seems to be even more natural, and considerably simplifies
the proofs of existence and uniqueness results. In particular,
it allows to avoid cumbersome regularization arguments, needed to justify
certain formal multiplications (cf.\ \cite{PZ}).
\end{remark}

Within the assumptions above on $\mu_t$, $g$ and $f$, we can state
our well-posedness theorem.

\begin{theorem}
\label{thm-ex-un}
For every $T>\tau \in \R$ and every initial datum
$z_\tau = (u_\tau, v_\tau, \eta_\tau) \in \H_\tau$,
problem \eqref{eqn-mem}-\eqref{in-cond} admits a unique solution $z(t)=(u(t), \pt u(t), \eta^t)$
on the interval $[\tau, T]$.
Besides,
\begin{align*}
& u  \in \C([\tau, T], \h^1) \cap \C^1([\tau, T], \h),\\
& \eta^t \in\M_t,\quad\forall t\in[\tau,T],
\end{align*}
and
$$\sup_{t\in[\tau,T]}\|z(t)\|_{\H_t}<C,$$
for some $C>0$ depending only on $T,\tau$ and the size of the initial datum.
\end{theorem}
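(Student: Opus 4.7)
The plan is a Faedo--Galerkin approximation, with the key technical difficulty arising from the fact that $\eta^t$ is not defined as the solution of an auxiliary evolution equation on $\M_t$ but directly through the representation formula \eqref{ETA}, and that $\M_t$ itself varies with $t$. Let $\{e_j\}$ be the orthonormal basis of $\h$ of eigenfunctions of $A$ and $P_n$ the orthogonal projection onto $\sp\{e_1,\dots,e_n\}$. Plugging \eqref{ETA} into \eqref{eqn-mem} rewrites the equation as a nonautonomous nonlinear integro-differential equation for $u$ alone, where the memory term splits into $\kappa(t)Au(t)-\int_0^{t-\tau}\mu_t(s)Au(t-s)\d s$ plus forcing terms depending on $\eta_\tau$ and $u_\tau$. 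Seeking $u_n\in\C^2([\tau,\tau_n];P_n\h^1)$ satisfying the projected equation with truncated initial data $(P_n u_\tau,P_n v_\tau,P_n\eta_\tau)$, the local integrability assumptions in \textbf{(M2)}--\textbf{(M3)} allow one to apply the Cauchy--Lipschitz theorem and obtain a local smooth solution.

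The heart of the argument is the derivation of a uniform a priori bound $\|z_n(t)\|_{\H_t}\leq C$ on $[\tau,T]$. Formally testing the projected equation with $\pt u_n$ gives
\[
\tfrac12\ddt\big(\|\pt u_n\|^2+\|u_n\|_1^2\big)+\ddt\int_\Omega F(u_n)\d x=\l g,\pt u_n\r-\int_0^\infty\mu_t(s)\l\eta_n^t(s),\pt u_n(t)\r_1\d s,
\]
while from \eqref{ETA} one reads off the formal identity $\pt\eta^t=\pt u-\ps\eta^t$. Differentiating $\|\eta_n^t\|_{\M_t}^2$ (which produces an additional $\int\dot\mu_t\|\eta_n^t\|_1^2\d s$ from the time-dependent weight) and integrating by parts in $s$ yields
\[
\tfrac12\ddt\|\eta_n^t\|_{\M_t}^2=\int_0^\infty\mu_t(s)\l\eta_n^t(s),\pt u_n(t)\r_1\d s+\tfrac12\int_0^\infty(\dot\mu_t+\mu_t')(s)\|\eta_n^t(s)\|_1^2\d s.
\]
Summing the two identities cancels the coupling term, and \textbf{(M4)} controls the residual by $\tfrac{M(t)}{2}\|\eta_n^t\|_{\M_t}^2$. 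The nonlinear contribution is handled by exploiting \eqref{phi1} together with the cubic bound \eqref{hp1-f} and the embedding $\h^1\subset L^6(\Omega)$, which make $\|u\|_1^2+2\int F(u)\d x$ equivalent to $\|u\|_1^2$ up to controlled lower-order corrections. Gronwall then delivers the desired bound.

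The hard part, and the reason Section 5 exists, is the rigorous justification of the above energy identity for $\|\eta_n^t\|_{\M_t}^2$. Indeed $\eta_n^t$ is only known to lie in $\M_t$, the spaces $\M_t$ vary with $t$ by a mere set inclusion, and $\pt\eta_n^t$ is a priori not an element of $\M_t$, so interchanging $\ddt$ with the integral in $s$ requires care. My strategy is to replace $\mu_t(s)$ and $\eta_n^t$ by suitable mollifications in $t$ that live in a common reference space, justify the identity at the approximate level thanks to the local $L^\infty$ bounds of \textbf{(M3)}, and then take limits, with \textbf{(M2)} preserving uniform bounds. The outcome is an integrated Gronwall-type inequality that replaces the formal differential one.

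With uniform bounds in place, standard compactness closes existence: weak-$\ast$ limits of $u_n$ and $\pt u_n$ in $L^\infty(\tau,T;\h^1)$ and $L^\infty(\tau,T;\h)$, together with strong convergence of $u_n$ in $L^2(\tau,T;\h)$ by Aubin--Lions and the growth bound \eqref{hp1-f}, allow passage to the limit in the weak formulation; defining $\eta^t$ from the limit $u$ via \eqref{ETA} yields an element of $\M_t$ for a.e.\ $t$ as noted in the remark after Definition 4.1, and condition (i) gives the regularity for $\ptt u$. For uniqueness, a difference-of-solutions estimate with the same integral Gronwall mechanism works: writing $\bar u=u_1-u_2$, $\bar\eta=\eta_1-\eta_2$ (which satisfies the analogue of \eqref{ETA} with zero datum), testing by $\pt\bar u$, and using the locally Lipschitz bound $|f(u_1)-f(u_2)|\leq C(1+|u_1|^2+|u_2|^2)|\bar u|$ combined with the uniform $L^\infty(\tau,T;L^6)$ control on each $u_i$, one closes the argument. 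Finally, the energy equality yields $u\in\C([\tau,T],\h^1)\cap\C^1([\tau,T],\h)$ once uniqueness is known.
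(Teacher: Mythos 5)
Your overall architecture coincides with the paper's: Galerkin approximation, an \emph{integral} (rather than differential) Gronwall inequality for $\|\eta^t\|^2_{\M_t}$ obtained by regularization and a limit passage, compactness for existence, and a difference estimate for uniqueness. One remark on the key lemma: the paper's regularization is a cut-off of the kernel \emph{in $s$} near $0$ and $\infty$ (plus smoothing of the data), not a mollification in $t$; this matters because {\bf (M3)} only controls $\mu_t,\dot\mu_t$ on compact subsets of $\R\times\R^+$ and $\mu_t(s)$ may blow up as $s\to0^+$, so the singular boundary term $\tfrac1\eps\chi_{[\eps,2\eps]}(s)\mu_t(s)\|\eta^t(s)\|^2_{\sigma+1}$ produced by the cut-off must be dominated via the pointwise bound $\mu_t(s)\|\eta^t(s)\|^2_{\sigma+1}\leq s\|\ps\eta^t\|^2_{\M_t^\sigma}$ before Fatou's Lemma applies. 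Your sketch identifies the right difficulty but does not address this specific obstruction.

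The genuine gap is in the uniqueness step. For weak solutions with data merely in $\H_\tau$ one only has $\ptt\bar u\in L^1(\tau,T;\h^{-1})$ and $\pt\bar u\in L^\infty(\tau,T;\h)$, so $\pt\bar u$ is \emph{not} an admissible test function: the pairing $\l\ptt\bar u,\pt\bar u\r$ and the identity $\l\ptt\bar u,\pt\bar u\r=\tfrac12\ddt\|\pt\bar u\|^2$ are undefined at this regularity level. The paper circumvents this by testing with $\varphi=A^{-1}\pt\bar u\in\h^1$, which closes a Gronwall estimate in the \emph{weaker} norm $\H_t^{-1}$ (using $\|f(u_1)-f(u_2)\|_{-1}\leq C\|\bar u\|$ via $L^{6/5}$); this already gives uniqueness. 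The strong continuous dependence in $\H_t$, and likewise the continuity $u\in\C([\tau,T],\h^1)\cap\C^1([\tau,T],\h)$, are then recovered only \emph{a posteriori} by approximating the data in $\H_\tau$ with data in $\H_\tau^1$, for which the solutions are regular enough that $\pt\bar u\in\h^1$ is a legitimate multiplier, and passing to the limit (Cauchy sequence in $\C([\tau,T],\h^1\times\h)$, weak lower semicontinuity of the norm). Your closing claim that ``the energy equality yields'' the time continuity is not available as stated: the whole machinery of Section 5 only produces one-sided integral \emph{inequalities} (Fatou), so the standard route ``weak continuity plus norm continuity from the energy identity'' is not obviously open here, and a density argument through regular solutions is needed.
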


Actually, given any two solutions $z_1(t)$ and $z_2(t)$ on $[\tau,T]$,
the following continuous dependence result holds.

\begin{theorem}
\label{thm-cont-2}
There exists a positive constant $C$, depending only on $T,\tau$ and the size of the initial data, such that
$$\| z_1(t) - z_2(t) \|_{\H_t}
\leq C\| z_1(\tau) - z_2(\tau) \|_{\H_\tau}$$
for every $t \in [\tau,T]$.
\end{theorem}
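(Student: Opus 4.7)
The plan is to run a standard energy estimate on the difference of the two solutions and close it by Gronwall's lemma, with every formal manipulation justified via the integral inequalities developed in Section~5. Set $\bar u = u_1 - u_2$, $\bar v = \pt u_1 - \pt u_2$, and $\bar\eta^t = \eta_1^t - \eta_2^t$. Subtracting the equations satisfied by $z_1$ and $z_2$ yields
$$\ptt \bar u + A\bar u + \int_0^\infty \mu_t(s)\,A\bar\eta^t(s)\,\d s + f(u_1) - f(u_2) = 0,$$
while the representation formula \eqref{ETA}, postulated in item~(iii) of Definition~\ref{def-sol}, transfers linearly to $\bar\eta^t$ with initial value $\bar\eta_\tau = \eta_{1,\tau} - \eta_{2,\tau}$. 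The natural functional to control is
$$E(t) = \tfrac12\|z_1(t) - z_2(t)\|_{\H_t}^2 = \tfrac12\big(\|\bar u(t)\|_1^2 + \|\bar v(t)\|^2 + \|\bar\eta^t\|_{\M_t}^2\big).$$

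Formally testing the difference equation with $\bar v$ and differentiating $\|\bar\eta^t\|_{\M_t}^2$, the Dafermos identity $\pt\bar\eta^t = \bar v - \ps\bar\eta^t$ together with an integration by parts in $s$ (valid since $\bar\eta^t(0)=0$) produces
$$\ddt E(t) = \tfrac12 \int_0^\infty [\dot\mu_t(s) + \mu_t'(s)]\,\|\bar\eta^t(s)\|_1^2\,\d s - \l f(u_1) - f(u_2),\,\bar v\r,$$
where the two cross-contributions $\pm\int_0^\infty\mu_t(s)\l\bar\eta^t(s),\bar v\r_1\d s$ cancel exactly. Assumption \textbf{(M4)} bounds the first term on the right-hand side by $M(t)\,\|\bar\eta^t\|_{\M_t}^2$, and for the nonlinearity the growth condition~\eqref{hp1-f} together with the embedding $\h^1\subset L^6(\Omega)$ gives
$$|\l f(u_1)-f(u_2),\bar v\r| \leq C\big(1 + \|u_1\|_1^2 + \|u_2\|_1^2\big)\|\bar u\|_1\|\bar v\|.$$
The a~priori bound $\sup_{[\tau,T]}(\|u_1\|_1 + \|u_2\|_1) \leq R$ supplied by Theorem~\ref{thm-ex-un} then yields $\ddt E(t) \leq [2M(t) + C_R]E(t)$, and Gronwall's lemma closes the estimate since $M$ is bounded on $[\tau,T]$.

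The hard part is to make the above energy identity rigorous. Since the spaces $\H_t$ depend geometrically on $t$ and $\bar\eta^t$ lives in the progressively enlarging memory space $\M_t$, the time derivative $\pt\bar\eta^t$ is not an intrinsic object and the formal manipulation cannot be applied as such---precisely the obstacle discussed in the Introduction, and the very reason why Definition~\ref{def-sol} introduces $\eta^t$ only through the representation formula \eqref{ETA}. As in the existence and uniqueness proofs, the remedy is to replace the formal differential identity by an integral one obtained by inserting \eqref{ETA} into $\|\bar\eta^t\|_{\M_t}^2$, splitting the integral at $s=t-\tau$, and differentiating each piece separately with the help of \textbf{(M2)}--\textbf{(M4)} and the absolute continuity of $s\mapsto \mu_t(s)$. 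Because the pivotal cancellation of the memory cross-terms originates from the equation itself rather than from the ill-defined derivative, it survives this procedure verbatim; Gronwall applied in integral form then delivers the stated continuous-dependence estimate.
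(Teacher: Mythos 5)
Your overall energy-plus-Gronwall strategy is the right one, and you correctly identify that the memory component must be handled through the integral inequality of Theorem~\ref{theorem-eta-norm} rather than through the ill-defined derivative $\pt\bar\eta^t$. But there is a genuine gap in the very first step: for weak solutions, $\bar v=\pt\bar u$ is \emph{not} an admissible test function. Definition~\ref{def-sol}(iv) requires $\varphi\in\h^1$, while a solution only guarantees $\pt\bar u\in L^\infty(\tau,T;\h)$ and $\ptt\bar u\in L^1(\tau,T;\h^{-1})$; the pairings $\l\ptt\bar u,\pt\bar u\r$ and $\l\bar u,\pt\bar u\r_1$, and the chain-rule identities $\frac12\ddt\|\pt\bar u\|^2=\l\ptt\bar u,\pt\bar u\r$, $\frac12\ddt\|\bar u\|_1^2=\l\bar u,\pt\bar u\r_1$, are therefore not defined or not justified at this regularity level. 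This is the classical obstruction to energy identities for weak solutions of hyperbolic equations, independent of the time-dependent-space issue; your discussion of ``the hard part'' addresses only the $\eta$-component and leaves the $(u,\pt u)$-component unrepaired. Note also that, unlike in the existence proof, you cannot retreat to the Galerkin level here, since the difference of two solutions does not solve a Galerkin system.

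The paper closes this gap with a three-step scheme that your proposal would need to reproduce in some form. First (Proposition~\ref{lemma-dep}) it tests the difference equation with $A^{-1}\pt\bar u\in\h^1$, which \emph{is} admissible, and obtains the Lipschitz estimate one Sobolev index lower, in $\H_t^{-1}$; this already gives uniqueness. Second (Lemmas~\ref{thm-ex-un-wak-1} and~\ref{stima-reg}) it shows that initial data in $\H_\tau^1$ propagate to solutions with $\pt u\in L^\infty(\tau,T;\h^1)$, for which your computation with $\varphi=\pt\bar u$ becomes legitimate, and crucially the resulting constant depends only on the $\H_\tau$-norms of the data. Third, it approximates arbitrary data in $\H_\tau$ by regular ones and transfers the estimate by weak lower semicontinuity of the norm, using the $\H_t^{-1}$ estimate of the first step to identify the weak limits. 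Without this regularization layer (or an equivalent device), the claimed differential inequality $\ddt E\leq[2M(t)+C_R]E$ cannot be derived, so the proof as written is incomplete.
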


Then, for every initial datum $z_\tau\in\H_\tau$, we can write the solution $z(t)$ as
$$z(t)=U(t,\tau)z_\tau.$$
The two-parameter family of operators
$$U(t,\tau):\H_\tau\to\H_t,\quad t\geq\tau,$$
is called a {\it processes on time-dependent spaces}
(see \cite{calimero,katilina,CPT,oscillon,oscillon2}), characterized by the two properties:
\begin{itemize}
\smallskip
\item[(i)] $U(\tau,\tau)$ is the identity map on ${\H_\tau}$ for every $\tau$;
\smallskip
\item[(ii)] $U(t,\tau)U(\tau,s)=U(t,s)$ for every $t\geq\tau\geq s$.
\smallskip
\end{itemize}

The rest of the paper is devoted to the proofs of Theorems \ref{thm-ex-un}
and \ref{thm-cont-2}.
%%%%%%%%%%%%%%%%%%%%%%%%%%%%%%%%%%%%%%%

%%%%%%%%%%%%%%%%%%%%%%%%%%%%%%%%%%%%%%%
\section{A Key Inequality}
\label{sec-key}

\noindent
The main technical tool in order to produce the energy estimates
needed in the analysis
is an integral inequality involving the norm of the auxiliary variable
in the time-dependent memory space.
Let then $\sigma\in\R$ and $T>\tau\in \R$ be arbitrarily fixed, and let
$$u \in W^{1,\infty}(\tau,T;\h^{\sigma+1})\and \eta_\tau \in \M^\sigma_\tau$$
be any two given functions.
Recall the standard embedding
$$W^{1,\infty}(\tau,T; \h^{\sigma+1})\subset\C([\tau,T],\h^{\sigma+1}).$$
Defining $\eta=\eta^t(s)$, with $(t,s)\in [\tau,T]\times\R^+$, by the formula
\begin{equation}
\label{eta-rappr}
\eta^t(s) =
\begin{cases}
u(t) - u(t-s),                     &\, s \leq t- \tau,\\
\eta_{\tau}(s-t+\tau) + u(t) - u(\tau), & \, s > t -\tau,
\end{cases}
\end{equation}
the following theorem holds.

\begin{theorem}
\label{theorem-eta-norm}
For all $\tau\leq a\leq b\leq T$,
we have the inequality
$$
\|\eta^b\|^2_{\M_b^{\sigma}}
\leq \|\eta^a\|^2_{\M_a^{\sigma}}
+M\int_a^b\|\eta^t\|^2_{\M_t^{\sigma}}\d t
+2\int_a^b\l\pt u(t),\eta^t\r_{\M_t^{\sigma}}\d t,
$$
having set
$$M=\sup_{t\in[\tau,T]}M(t).$$
\end{theorem}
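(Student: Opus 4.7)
The plan is to establish the differential form
$$\ddt\|\eta^t\|^2_{\M_t^\sigma}\leq M(t)\|\eta^t\|^2_{\M_t^\sigma}+2\l\pt u(t),\eta^t\r_{\M_t^\sigma}\qquad\text{a.e. }t\in[\tau,T],$$
and integrate on $[a,b]$, bounding $M(t)\leq M$. Formally, setting $I(t)=\|\eta^t\|^2_{\M_t^\sigma}$ and differentiating under the integral sign,
$$I'(t)=\int_0^\infty\dot\mu_t(s)\|\eta^t(s)\|^2_{\sigma+1}\d s+2\int_0^\infty\mu_t(s)\l\pt\eta^t(s),\eta^t(s)\r_{\sigma+1}\d s.$$
A direct inspection of \eqref{eta-rappr} on each of its two pieces yields the pointwise identity $\pt\eta^t(s)+\ps\eta^t(s)=\pt u(t)$; substituting this into the second integral and integrating by parts in $s$ separately on $[0,t-\tau]$ and $[t-\tau,\infty)$ makes all the boundary contributions disappear---at $s=0$ since $\eta^t(0^+)=0$ and $\|\eta^t(s)\|=O(s)$ near zero, at $s=t-\tau$ by continuity of $\eta^t$ and $\mu_t$, and at $s=\infty$ by integrability of $\mu_t$ together with boundedness of $\eta^t$---leaving
$$I'(t)=\int_0^\infty[\dot\mu_t(s)+\mu_t'(s)]\|\eta^t(s)\|^2_{\sigma+1}\d s+2\l\pt u(t),\eta^t\r_{\M_t^\sigma};$$
the inequality {\bf (M4)} then delivers the desired pointwise bound.

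To make this rigorous, I would approximate the data. By mollification in time, choose $u_n\in\C^1([\tau,T];\h^{\sigma+1})$ with $u_n\to u$ uniformly on $[\tau,T]$ and $\pt u_n\to\pt u$ in $L^1(\tau,T;\h^{\sigma+1})$; by density in weighted $L^2$-spaces, pick $(\eta_\tau)_n\in\C^\infty_c(\R^+;\h^{\sigma+1})$ (with support away from $0$) converging to $\eta_\tau$ in $\M_\tau^\sigma$. Defining $\eta_n^t$ from $u_n,(\eta_\tau)_n$ via \eqref{eta-rappr}, the function $(t,s)\mapsto\eta_n^t(s)$ is uniformly bounded on $[\tau,T]\times\R^+$ and Lipschitz in both arguments, and continuous across the junction $s=t-\tau$ in virtue of $(\eta_\tau)_n$ vanishing at $0$. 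For this regularized data all the manipulations above are justified: Leibniz's rule applies to $I_n(t)=\|\eta_n^t\|^2_{\M_t^\sigma}$ thanks to {\bf (M3)}, and the integration by parts in $s$ on each sub-interval produces boundary contributions at the junction which cancel pairwise by continuity of $\eta_n^t$ and $\mu_t$ in $s$. Integrating the resulting differential inequality on $[a,b]$ gives the claimed inequality for $\eta_n$.

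Passage to the limit is the last step. The linearity of \eqref{eta-rappr}, a triangle inequality, the substitution $y=s-t+\tau$ on the tail $s>t-\tau$, and {\bf (M2)} coupled with the monotonicity of $\mu_\tau$ yield
$$\|\eta_n^t-\eta^t\|^2_{\M_t^\sigma}\leq C\kappa(t)\sup_{[\tau,T]}\|u_n-u\|^2_{\sigma+1}+CK_\tau(t)\|(\eta_\tau)_n-\eta_\tau\|^2_{\M_\tau^\sigma}\longrightarrow 0$$
uniformly in $t\in[\tau,T]$, so that $I_n\to I$ uniformly on $[\tau,T]$. Convergence of the mixed term $\int_a^b\l\pt u_n(t),\eta_n^t\r_{\M_t^\sigma}\d t$ to its $(u,\eta)$-counterpart follows from Cauchy-Schwarz in $\M_t^\sigma$, the uniform bound $\kappa(t)\leq K_\tau(t)\kappa(\tau)$, and the $L^1$-convergence of $\pt u_n$ together with the uniform $\M_t^\sigma$-convergence of $\eta_n^t$. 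The hard part will be the middle paragraph: because $\M_t^\sigma$ depends on $t$ at the set-theoretic level, there is no universal ambient space in which to make sense of $\pt\eta^t$, so the entire derivation must be grounded on the explicit piecewise formula \eqref{eta-rappr}, and the possible blowup of $\mu_t$ near $s=0$ must be reconciled with the vanishing of $\eta^t$ there in order to make the integration by parts tractable.
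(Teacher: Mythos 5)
Your overall architecture (regularize $u$ and $\eta_\tau$, derive a differential inequality for $t\mapsto\|\eta^t\|^2_{\M_t^\sigma}$, integrate, pass to the limit) coincides with the paper's, and your final limit passage is essentially the paper's own, modulo one slip: since $K_\tau$ is only assumed summable on $[\tau,T]$, your bound $\|\eta_n^t-\eta^t\|^2_{\M_t^\sigma}\leq C\,\sup\|u_n-u\|^2_{\sigma+1}\kappa(t)+CK_\tau(t)\|(\eta_\tau)_n-\eta_\tau\|^2_{\M_\tau^\sigma}$ yields pointwise convergence in $t$ with an $L^1$ majorant, not uniform convergence; dominated convergence still closes that step.

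The genuine gap is in the middle step. You invoke Leibniz's rule for $I_n(t)=\int_0^\infty\mu_t(s)\|\eta_n^t(s)\|^2_{\sigma+1}\,\d s$ ``thanks to {\bf (M3)}'', but {\bf (M3)} only provides $L^\infty$ bounds for $\mu_t$ and $\dot\mu_t$ on \emph{compact} subsets of $\R\times\R^+$, i.e.\ away from $s=0$ and $s=\infty$. Nothing in {\bf (M1)}--{\bf (M4)} controls $|\dot\mu_t(s)|$ near $s=0$ (where $\mu_t$ itself may blow up) or near $s=\infty$: {\bf (M4)} is a one-sided bound on the sum $\dot\mu_t+\mu_t'$, not a two-sided bound on $\dot\mu_t$ alone. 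Hence $\dot\mu_t(s)\|\eta_n^t(s)\|^2_{\sigma+1}$ need not be dominated by a function integrable on $\R^+$ --- indeed $\int_0^\infty\dot\mu_t(s)\|\eta_n^t(s)\|^2_{\sigma+1}\,\d s$ need not converge --- so differentiation under the integral sign over the whole half-line is not justified, even for your regularized data. This is exactly the obstruction the paper's proof is built around: it first replaces $\mu_t$ by $\mu_t^\eps=\phi_\eps\mu_t$, with $\phi_\eps$ a cutoff supported in $[\eps,2/\eps]$, so that {\bf (M3)} applies on the compact set $[\tau,T]\times[\eps,2/\eps]$ and an exact integrated identity can be derived; it then lets $\eps\to0$, using Fatou's lemma on $g_\eps=-[\dot\mu_t^\eps+(\mu_t^\eps)']\|\eta^t\|^2_{\sigma+1}$, which requires a uniform integrable lower bound. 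That bound rests on {\bf (M4)} for the $\phi_\eps[\dot\mu_t+\mu_t']$ part, and, for the $\phi_\eps'\mu_t$ part concentrated near $s=\eps$, on the estimate $\mu_t(s)\|\eta^t(s)\|^2_{\sigma+1}\leq s\|\ps\eta^t\|^2_{\M_t^\sigma}\leq\Psi(u,\eta_\tau)\,sK_\tau(t)$, which in turn requires $\eta_\tau\in\D(\T_\tau)$, i.e.\ $\eta_\tau'\in\M_\tau^\sigma$. You correctly flag that the blow-up of $\mu_t$ at $0$ ``must be reconciled'' with the vanishing of $\eta^t$ there, but you supply no mechanism for doing so; the truncation-in-$s$ plus Fatou layer is the missing idea, and without it the differential identity at the heart of your argument is unproved.
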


The proof of Theorem~\ref{theorem-eta-norm} requires a number of preparatory lemmas.

\begin{lemma}
\label{lemma-bd-eta}
Setting
$$\Phi(u,\eta_\tau)=6\kappa(\tau)\|u\|^2_{L^\infty(\tau,T;\h^{\sigma+1})}+3\|\eta_{\tau}\|^2_{\M_\tau^{\sigma}},$$
we have that $\eta^t \in \M_\tau^{\sigma}\subset \M_t^{\sigma}$ with
$$\|\eta^t\|_{\M_\tau^\sigma}^2\leq \Phi(u,\eta_\tau),\quad \forall t\in[\tau,T],$$
and
$$\|\eta^t\|_{\M_t^\sigma}^2\leq \Phi(u,\eta_\tau)K_\tau(t)\in L^1(\tau,T).$$
\end{lemma}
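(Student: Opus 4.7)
My plan is to verify both inequalities by direct calculation, splitting the defining integral $\|\eta^t\|^2_{\M_\tau^\sigma}=\int_0^\infty\mu_\tau(s)\|\eta^t(s)\|^2_{\sigma+1}\,\d s$ at the threshold $s=t-\tau$ in accordance with the piecewise representation~\eqref{eta-rappr}. The two branches are handled with slightly different triangle-inequality bounds, and the $\M_t^\sigma$ assertion then follows almost immediately from the embedding estimate~\eqref{norme-Mt-tau} together with assumption~{\bf (M2)}.

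On the inner region $0<s\leq t-\tau$, where the integrand involves $u(t)-u(t-s)$, I would apply $\|a-b\|^2\leq 2\|a\|^2+2\|b\|^2$ to get the pointwise bound $\|\eta^t(s)\|^2_{\sigma+1}\leq 4\|u\|^2_{L^\infty(\tau,T;\h^{\sigma+1})}$, contributing $4\|u\|^2_{L^\infty}\int_0^{t-\tau}\mu_\tau(s)\,\d s$ after integration against $\mu_\tau$. On the outer region $s>t-\tau$, where the integrand has the three pieces $u(t),-u(\tau),\eta_\tau(s-t+\tau)$, I would use the sharper-for-three-terms inequality $\|a+b+c\|^2\leq 3(\|a\|^2+\|b\|^2+\|c\|^2)$ to separate them: the two $u$-contributions integrate to $6\|u\|^2_{L^\infty}\int_{t-\tau}^\infty\mu_\tau(s)\,\d s$, while for the $\eta_\tau$-piece I would substitute $y=s-t+\tau$ and crucially use the monotonicity of $\mu_\tau$ granted by~{\bf (M1)} to bound $\mu_\tau(y+t-\tau)\leq\mu_\tau(y)$; the resulting integral is then at most $3\|\eta_\tau\|^2_{\M_\tau^\sigma}$. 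Collecting the two regions and observing that $4\int_0^{t-\tau}\mu_\tau+6\int_{t-\tau}^\infty\mu_\tau\leq 6\kappa(\tau)$ produces exactly $\Phi(u,\eta_\tau)$, which simultaneously shows $\eta^t\in\M_\tau^\sigma$ and the first stated bound.

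For the second claim, the inclusion $\M_\tau^\sigma\subset\M_t^\sigma$ via~\eqref{norme-Mt-tau} gives $\|\eta^t\|^2_{\M_t^\sigma}\leq K_\tau(t)\|\eta^t\|^2_{\M_\tau^\sigma}\leq K_\tau(t)\Phi(u,\eta_\tau)$, and assumption~{\bf (M2)} ensures that $K_\tau$ is summable on $[\tau,T]$, yielding the asserted $L^1$-membership. I do not anticipate any genuine obstacle here; the only delicate point is the substitution step, where the monotonicity of the kernel becomes essential. The whole argument is purely pointwise in $t$, so no approximation or regularization is needed, confirming that the postulated representation formula~\eqref{eta-rappr} really is a painless replacement for the usual translation-semigroup construction.
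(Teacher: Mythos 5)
Your proposal is correct and follows essentially the same route as the paper: split the integral at $s=t-\tau$, bound each branch by the triangle inequality, and handle the $\eta_\tau$-piece via the substitution $y=s-t+\tau$ together with the monotonicity of $\mu_\tau$ from {\bf (M1)}, then conclude the second claim from \eqref{norme-Mt-tau} and the summability of $K_\tau$ in {\bf (M2)}. The only cosmetic difference is that you track the constant $4$ on the inner region before absorbing everything into $6\kappa(\tau)$, whereas the paper bounds both regions by $6$ at once.
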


\begin{proof}
Recalling that $\mu_\tau(\cdot)$ is nonincreasing, we have
\begin{align*}
\|\eta^t\|^2_{\M_\tau^\sigma}
&=\int_0^{t-\tau}\mu_\tau(s)\|u(t)-u(t-s)\|^2_{\sigma+1}\d s \\
&\quad+\int_{t-\tau}^\infty\mu_\tau(s)
\|\eta_{\tau}(s-t+\tau)+u(t)-u(\tau)\|^2_{\sigma+1}\d s \\
&\leq 6\|u\|^2_{L^\infty(\tau,T;\h^{\sigma+1})}\int_0^\infty \mu_\tau(s)\d s
+3\int_0^\infty\mu_\tau(s+t-\tau)
\|\eta_{\tau}(s)\|^2_{\sigma+1}\d s\\
\noalign{\vskip1mm}
&\leq \Phi(u,\eta_\tau).
\end{align*}
The latter inequality follows from {\bf (M2)} and \eqref{norme-Mt-tau}.
\end{proof}

\begin{remark}
\label{rem-bd-eta}
It is clear from the proof that the conclusion of Lemma \ref{lemma-bd-eta} is true without any assumption on $\pt u$,
provided that $u\in\C([\tau,T],\h^{\sigma+1})$.
\end{remark}

\begin{lemma}
\label{lemma-eta-norm0}
Assume in addition that
$\eta_\tau\in\D(\T_\tau)$. Then $\eta^t\in \D(\T_\tau)$ for every $t\in [\tau,T]$,
$\eta\in W^{1,\infty}(\tau,T; \M_\tau^\sigma)$ and  the differential equation
$$\pt\eta^t = \T_\tau\eta^t + \pt u(t)$$
holds in $\M_\tau^{\sigma}$.
\end{lemma}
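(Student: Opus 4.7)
The plan is to verify the three assertions directly from the representation formula~\eqref{eta-rappr}. The assumption $\eta_\tau \in \D(\T_\tau)$ plays a double role: the boundary condition $\eta_\tau(0) = 0$ makes the two branches of $\eta^t$ match at the sliding junction $s = t - \tau$, while the membership $\eta_\tau' \in \M_\tau^\sigma$ provides the tail control needed for the memory integrals.

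For $\eta^t \in \D(\T_\tau)$, Lemma~\ref{lemma-bd-eta} (via Remark~\ref{rem-bd-eta}) already grants $\eta^t \in \M_\tau^\sigma$. Differentiating~\eqref{eta-rappr} piecewise in $s$ produces the candidate
$$
(\eta^t)'(s) = \begin{cases} \pt u(t-s), & 0 < s < t-\tau,\\ \eta_\tau'(s - t + \tau), & s > t - \tau, \end{cases}
$$
which is the distributional derivative on $\R^+$ because the two expressions of $\eta^t$ agree at $s = t-\tau$ thanks to $\eta_\tau(0) = 0$. Its $\M_\tau^\sigma$-norm is controlled on $(0, t-\tau)$ by $\|\pt u\|_{L^\infty(\tau,T;\h^{\sigma+1})}\sqrt{\kappa(\tau)}$, and on $(t-\tau, \infty)$ by $\|\eta_\tau'\|_{\M_\tau^\sigma}$, after the change of variable $y = s - t + \tau$ and using the monotonicity of $\mu_\tau$. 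Finally, $\eta^t(s) = u(t) - u(t-s) \to 0$ in $\h^{\sigma+1}$ as $s \to 0^+$ by continuity of $u$, concluding that $\eta^t \in \D(\T_\tau)$.

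For the remaining two claims, it is convenient to introduce the extended history $\tilde u: (-\infty, T] \to \h^{\sigma+1}$ defined by $\tilde u(r) = u(r)$ for $r \geq \tau$ and $\tilde u(r) = u_\tau - \eta_\tau(\tau - r)$ for $r < \tau$, so that~\eqref{eta-rappr} rewrites uniformly as $\eta^t(s) = u(t) - \tilde u(t-s)$. Thanks to $\eta_\tau(0) = 0$, $\tilde u$ is absolutely continuous at $\tau$, and its weak derivative equals $\pt u$ on $(\tau, T)$ and $\eta_\tau'(\tau - \cdot)$ on $(-\infty, \tau)$. Define the candidate $\psi_t(s) = \pt u(t) - \tilde u'(t-s)$. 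A direct computation (a substitution in $s$ plus the monotonicity of $\mu_\tau$) shows that $\|\psi_t\|_{\M_\tau^\sigma}$ is bounded uniformly in $t \in [\tau, T]$ by $\|\pt u\|_{L^\infty(\tau,T;\h^{\sigma+1})}(1 + \sqrt{\kappa(\tau)}) + \|\eta_\tau'\|_{\M_\tau^\sigma}$. Moreover, integrating in time at each fixed $s$ and using the unified formula, one directly verifies
$$
\eta^{t_2}(s) - \eta^{t_1}(s) = \int_{t_1}^{t_2} \psi_r(s)\,\d r, \qquad \tau \leq t_1 \leq t_2 \leq T,
$$
and this pointwise identity, together with Fubini and the uniform $\M_\tau^\sigma$-bound on $\psi_r$, upgrades to the Bochner identity $\eta^{t_2} - \eta^{t_1} = \int_{t_1}^{t_2} \psi_r\,\d r$ in $\M_\tau^\sigma$. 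Hence $\eta \in W^{1,\infty}(\tau, T; \M_\tau^\sigma)$ with $\pt \eta^t = \psi_t$, and the piecewise matching $\tilde u'(t-s) = (\eta^t)'(s)$ rewrites $\psi_t = \pt u(t) + \T_\tau \eta^t$, as required.

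The main obstacle I anticipate is the identification of the pointwise piecewise derivative with the derivative in $\M_\tau^\sigma$, a priori a subtle Bochner statement complicated by the sliding junction $s = t-\tau$ at which the defining formula switches as $t$ varies. The device of the extended history $\tilde u$ circumvents any delicate case-by-case analysis near that junction and reduces the matter to the integral identity above, which is elementary to verify pointwise and lifts painlessly by Fubini thanks to the uniform $\M_\tau^\sigma$-bound on $\psi_t$.
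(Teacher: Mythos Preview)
Your proof is correct and follows essentially the same route as the paper: piecewise differentiation of the representation formula~\eqref{eta-rappr}, the match at $s=t-\tau$ coming from $\eta_\tau(0)=0$, and the $\M_\tau^\sigma$-bounds via $\kappa(\tau)$ and $\|\eta_\tau'\|_{\M_\tau^\sigma}$ using the monotonicity of $\mu_\tau$. The paper simply writes down the pointwise formulas~\eqref{T-eta}--\eqref{pt-eta} for $\ps\eta^t$ and $\pt\eta^t$, bounds the latter in $\M_\tau^\sigma$, and declares $\eta\in W^{1,\infty}(\tau,T;\M_\tau^\sigma)$; your extended-history reformulation $\eta^t(s)=u(t)-\tilde u(t-s)$ and the integral identity lifted by Fubini are a slightly more explicit way to certify that the pointwise candidate really is the Bochner weak derivative, which the paper takes for granted. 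One cosmetic slip: in your uniform bound for $\|\psi_t\|_{\M_\tau^\sigma}$ the factor $(1+\sqrt{\kappa(\tau)})$ should read $2\sqrt{\kappa(\tau)}$, since the constant function $\pt u(t)$ has $\M_\tau^\sigma$-norm $\sqrt{\kappa(\tau)}\,\|\pt u(t)\|_{\sigma+1}$, but this is immaterial to the argument.
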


\begin{proof}
Since $\eta_\tau\in \D(\T_\tau)\subset\M_\tau^\sigma$
and $u\in W^{1,\infty}(\tau,T; \h^{\sigma+1})$,
we can differentiate \eqref{eta-rappr} with respect to $s$ and to $t$ in the weak sense,
so obtaining
\begin{equation}
\label{T-eta}
\ps\eta^t(s) =
\begin{cases}
\pt u(t-s),                  &s \leq t-\tau,\\
\eta'_\tau(s-t+\tau),        &s > t-\tau,
\end{cases}
\end{equation}
and
\begin{equation}
\label{pt-eta}
\pt\eta^t(s) =
\begin{cases}
\pt u(t)-\pt u(t-s),                  &s \leq t-\tau,\\
\pt u(t)-\eta'_\tau(s-t+\tau),       &s > t-\tau.
\end{cases}
\end{equation}
Let us prove that $\eta^t \in \D(\T_\tau)$.
Since $u\in \C([\tau,T],\h^{\sigma+1})$,
we readily obtain the limit
$$\lim_{s\to 0} \eta^t(s)=0 \quad\text{in }\h^{\sigma+1}.$$
Moreover, as $\mu_\tau(\cdot)$ is nonincreasing
and $\eta_\tau \in \D(\T_\tau)\subset\M_\tau^\sigma$,
\begin{align}
\label{medpseta}
\|\ps\eta^t\|^2_{\M_\tau^\sigma}
&=\int_0^{t-\tau}\mu_\tau(s)\|\pt u(t-s)\|^2_{\sigma+1}\d s
+ \int_{t-\tau}^\infty\mu_\tau(s)\|\eta'_\tau(s-t+\tau)\|^2_{\sigma+1}\d s\\
&\leq \kappa(\tau)\|\pt u\|_{L^\infty(\tau,T; \h^{\sigma+1})}^2 + \|\eta'_\tau\|^2_{\M_\tau^\sigma},\notag
\end{align}
yielding $\ps\eta^t \in \M_\tau^\sigma$.
Analogous calculations provide the estimate
$${\rm ess}\sup_{\hskip-5mm t\in[\tau,T]}\|\pt\eta^t\|_{\M^\sigma_\tau}<\infty,$$
which, together with Lemma~\ref{lemma-bd-eta}, gives
$\eta\in W^{1,\infty}(\tau,T; \M_\tau^\sigma)$.
Finally, collecting \eqref{T-eta} and \eqref{pt-eta}, it follows that
the differential equation
$$\pt\eta^t = \T_\tau\eta^t + \pt u(t)$$
holds in $\M_\tau^{\sigma}$.
\end{proof}

\begin{remark}
Since $\M_\tau^{\sigma} \subset \M_t^{\sigma}$, recalling \eqref{inkp} the latter differential equation gives
\begin{equation}
\label{pippopazzo}
\pt\eta^t = \T_t\eta^t + \pt u(t),
\end{equation}
where the equality holds in $\M_t^\sigma$ at any fixed $t$.
\end{remark}

\begin{remark}
When  $\eta_\tau\in \D(\T_\tau)$, from \eqref{norme-Mt-tau} and \eqref{medpseta} we
deduce the estimate
\begin{equation}
\label{salva}
\|\ps \eta^t\|_{\M_t^\sigma}^2\leq
\Psi(u,\eta_\tau)
K_\tau(t),\quad\forall t\in[\tau,T],
\end{equation}
where
$$\Psi(u,\eta_\tau)=\kappa(\tau)\|\pt u\|^2_{L^\infty(\tau,T;\h^{\sigma+1})}+\|\eta'_{\tau}\|^2_{\M_\tau^{\sigma}}.
$$
\end{remark}

We are ready to prove an integral inequality for more regular data.

\begin{lemma}
\label{lemma-eta-norm}
Assume that $u \in \C^1([\tau,T],\h^{\sigma+1})$ and
$\eta_\tau \in \C^1(\R^+,\h^{\sigma+1})\cap\D(\T_\tau)$.
Then, for all $\tau\leq a\leq b\leq T$,
we have the inequality
$$
\|\eta^b\|^2_{\M_b^{\sigma}}
\leq \|\eta^a\|^2_{\M_a^{\sigma}}
+\int_a^b\!\!\int_0^\infty\big[\dot\mu_t(s)+\mu'_t(s)\big]\|\eta^t(s)\|^2_{\sigma+1}\d s\,\d t
+2\int_a^b\l\pt u(t),\eta^t\r_{\M_t^{\sigma}}\d t.
$$
\end{lemma}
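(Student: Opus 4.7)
The plan is to prove the stronger pointwise identity
$$\frac{d}{dt}\|\eta^t\|^2_{\M_t^{\sigma}}
=\int_0^\infty\bigl[\dot\mu_t(s)+\mu'_t(s)\bigr]\|\eta^t(s)\|^2_{\sigma+1}\d s
+2\l\pt u(t),\eta^t\r_{\M_t^{\sigma}}$$
for every $t\in[\tau,T]$, from which the claim follows upon integration on $[a,b]$. The enhanced regularity of this lemma (with $u\in\C^1$ and $\eta_\tau\in\C^1\cap\D(\T_\tau)$) is exactly what is needed to justify the Leibniz differentiation under the integral sign.

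The first step is to split the norm according to the piecewise representation \eqref{eta-rappr}, writing $\|\eta^t\|^2_{\M_t^\sigma}=I_1(t)+I_2(t)$, with
$$I_1(t)=\int_0^{t-\tau}\mu_t(s)\|u(t)-u(t-s)\|^2_{\sigma+1}\d s,\qquad
I_2(t)=\int_{t-\tau}^\infty\mu_t(s)\|\eta_\tau(s-t+\tau)+u(t)-u(\tau)\|^2_{\sigma+1}\d s,$$
and differentiating each term by the Leibniz rule. The moving endpoint $s=t-\tau$ generates two boundary contributions; they cancel exactly, because $\eta_\tau\in\D(\T_\tau)$ together with $\eta_\tau\in\C^1(\R^+,\h^{\sigma+1})$ forces $\eta_\tau(0^+)=0$, so both boundary integrands equal $\mu_t(t-\tau)\|u(t)-u(\tau)\|^2_{\sigma+1}$ with opposite signs. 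At the fixed endpoint $s=0$ of $I_1$ there is no issue either: since $\mu_t$ is nonincreasing and summable we have $s\mu_t(s)\to 0$ as $s\to 0^+$ (by the monotonicity estimate $s\mu_t(s)\le\int_0^s\mu_t$), while $\|u(t)-u(t-s)\|^2_{\sigma+1}=O(s^2)$ by the $\C^1$ regularity.

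The second step is to combine the resulting interior integrands. Using the explicit formulas for $\ps\eta^t$ provided by \eqref{T-eta} in Lemma \ref{lemma-eta-norm0} (namely $\ps\eta^t(s)=\pt u(t-s)$ for $s\le t-\tau$ and $\ps\eta^t(s)=\eta'_\tau(s-t+\tau)$ for $s>t-\tau$), a direct calculation shows that in both regions the $t$-derivative of $\mu_t(s)\|\eta^t(s)\|^2_{\sigma+1}$ equals
$$\dot\mu_t(s)\|\eta^t(s)\|^2_{\sigma+1}+2\mu_t(s)\l\eta^t(s),\pt u(t)-\ps\eta^t(s)\r_{\sigma+1}.$$
Thus $F'(t)$ splits into three contributions: the integral of $\dot\mu_t\|\eta^t\|^2_{\sigma+1}$, the term $2\l\pt u(t),\eta^t\r_{\M_t^\sigma}$, and $-2\l\ps\eta^t,\eta^t\r_{\M_t^\sigma}$.

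The third step handles this last term. By Lemma \ref{lemma-eta-norm0} we have $\eta^t\in\D(\T_\tau)\subset\D(\T_t)$ (via \eqref{inkp}) with $\T_t\eta^t=-\ps\eta^t$, so identity \eqref{T-diss} gives
$$-2\int_0^\infty\mu_t(s)\l\eta^t(s),\ps\eta^t(s)\r_{\sigma+1}\d s
=2\l\T_t\eta^t,\eta^t\r_{\M_t^\sigma}=\int_0^\infty\mu'_t(s)\|\eta^t(s)\|^2_{\sigma+1}\d s,$$
and collecting the three pieces yields the identity for $F'(t)$. The main technical obstacle will be showing that differentiation commutes with the integrals defining $I_1$ and $I_2$: beyond the boundary cancellation at $s=t-\tau$, one needs dominated-convergence bounds on the $t$-derivatives of the integrands, which follow from the $L^\infty_{\rm loc}$ bounds on $\mu_t$ and $\dot\mu_t$ in condition \textbf{(M3)} together with the global estimate \eqref{salva} on $\ps\eta^t$. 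Once $F$ is shown to be absolutely continuous with the stated derivative, integration on $[a,b]$ gives the inequality of the lemma (in fact an equality at this regularity level).
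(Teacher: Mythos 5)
Your formal computation (differentiate $\|\eta^t\|^2_{\M_t^\sigma}$, use the representation of $\ps\eta^t$, absorb $-2\l\ps\eta^t,\eta^t\r_{\M_t^\sigma}$ via \eqref{T-diss}) is exactly the paper's target identity, but the justification of the Leibniz step is where your argument breaks down, and it breaks down for a structural reason, not a technical one. To differentiate $I_1$ and $I_2$ under the integral sign you need a $t$-uniform integrable dominant for $|\dot\mu_t(s)|\,\|\eta^t(s)\|^2_{\sigma+1}$ on all of $(0,\infty)$. Condition \textbf{(M3)} gives $L^\infty$ bounds on $\mu_t$ and $\dot\mu_t$ only on compact subsets of $\R\times\R^+$ with $\R^+=(0,\infty)$, i.e.\ away from both $s=0$ (where $\mu_t$ is allowed to be unbounded) and $s=\infty$; and \textbf{(M4)} controls $\dot\mu_t+\mu_t'$ only \emph{from above}. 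Nothing in the hypotheses bounds $|\dot\mu_t(s)|$ near $s=0$ or for large $s$, so the domination you invoke is not available, and your $O(s^2)$ estimate on $\|u(t)-u(t-s)\|^2_{\sigma+1}$ does not rescue it since $s^2\dot\mu_t(s)$ need not be integrable near the origin. The one-sidedness of \textbf{(M4)} is also why your claim of an \emph{equality} is unwarranted: the term $\int_0^\infty[\dot\mu_t+\mu_t']\|\eta^t\|^2_{\sigma+1}\,\d s$ is only guaranteed to be bounded above (it may even equal $-\infty$), which is compatible with an inequality but not with the exact interchange of limits your identity requires.

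The paper resolves precisely this difficulty by inserting a cutoff $\phi_\eps$ supported in $[\eps,2/\eps]$, working with the truncated kernels $\mu_t^\eps=\phi_\eps\mu_t$ and spaces $\M_t^{\sigma,\eps}$, where \textbf{(M3)} legitimately yields the differentiation under the integral sign and hence the integrated identity \eqref{INT-eps}. The limit $\eps\to0$ is then taken with dominated convergence for the norm and cross terms, but with \emph{Fatou's lemma} for the $[\dot\mu_t^\eps+(\mu_t^\eps)']$ term: Fatou needs only the one-sided lower bound $g_\eps\geq-\psi$ supplied by \textbf{(M4)} and \eqref{salva}, and it is exactly this step that degrades the equality to the inequality in the statement. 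If you want to salvage your approach, you must either add the missing two-sided control of $\dot\mu_t$ near $s=0$ and $s=\infty$ as an extra hypothesis (which the paper deliberately avoids), or reintroduce a truncation and a Fatou-type passage, at which point you recover the paper's proof. Your boundary-cancellation observation at $s=t-\tau$ is correct, but it is the least of the problems.
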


\begin{proof}
For every $\eps>0$ small,
we introduce the cut-off function
$$\phi_\eps(s)=
\begin{cases}
0 & \text{if } 0\leq s<\eps,\\
s/\eps-1 & \text{if } \eps\leq s<2\eps,\\
1 & \text{if } 2\eps\leq s\leq 1/\eps,\\
2-\eps s & \text{if }  1/\eps\leq s<2/\eps,\\
0  & \text{if } 2/\eps\leq s.
\end{cases}
$$
Correspondingly, we define the family of approximate kernels
$$\mu_t^\eps(s)=\phi_\eps(s)\mu_t(s).$$
Denoting now
$$f_\eps(t,s)=\mu^\eps_t(s)\|\eta^t(s)\|^2_{\sigma+1},$$
we claim that
\begin{equation}
\label{puppo}
\int_0^\infty\ddt f_\eps(t,s)\d s=\ddt\int_0^\infty f_\eps(t,s)\d s.
\end{equation}
Indeed, from Lemma~\ref{lemma-bd-eta} we know that $s\mapsto f_\eps(t,s)\in L^1(\R^+)$ for every fixed $t$.
Moreover, since $t\mapsto\|\eta^t(s)\|^2_{\sigma+1}\in\C^1([\tau,T])$ for every $s$,
$$\ddt f_\eps(t,s)=\dot\mu^\eps_t(s)\|\eta^t(s)\|^2_{\sigma+1}+ 2\mu^\eps_t(s)\l\pt\eta^t(s),\eta^t(s)\r_{\sigma+1},$$
and \eqref{puppo} follows once we show the bound
\begin{equation}
\label{puppos}
\int_0^\infty\sup_{t\in [\tau,T]}\left|\ddt f_\eps(t,s)\right|\d s<\infty.
\end{equation}
To this end, in light of the assumptions on $u$ and $\eta_\tau$ along with formulae \eqref{eta-rappr} and \eqref{pt-eta},
we note that
$$\sup_{t\in [\tau,T]}\sup_{s\in [\eps,2/\eps]}\Big[\|\eta^t(s)\|_{\sigma+1}+\|\partial_t\eta^t(s)\|_{\sigma+1}\Big]<\infty.$$
Hence, exploiting {\bf (M3)} on the compact set ${\mathcal K}=[\tau,T]\times [\eps,2/\eps]$, there exists $C_\eps>0$
such that
$$
\left|\ddt f_\eps(t,s)\right|\leq C_\eps\phi_\eps(s)\leq C_\eps\chi_{[\eps,2/\eps]}(s).
$$
This proves~\eqref{puppos}.

\smallskip
\noindent
At this point, introducing the $\eps$-dependent memory space
$$\M_t^{\sigma,\eps}=L^2_{\mu_t^\eps}(\R^+;\h^{\sigma+1}),$$
with the usual scalar product and norm,
we multiply~\eqref{pippopazzo} by $2\eta^t$ in $\M_t^{\sigma,\eps}$, so to get
$$2\l \pt\eta^t,\eta^t\r_{\M_t^{\sigma,\eps}} =2\l \T_t\eta^t,\eta^t\r_{\M_t^{\sigma,\eps}} + 2\l \pt u(t),\eta^t\r_{\M_t^{\sigma,\eps}}.$$
Making use of~\eqref{puppo},
\begin{align*}
2\l\pt\eta^t,\eta^t\r_{\M_t^{\sigma,\eps}}
&= \int_0^\infty \mu^\eps_t(s)\ddt\|\eta^t(s)\|^2_{\sigma+1}\d s \\
&= \int_0^\infty\bigg[\ddt\big(\mu^\eps_t(s)\|\eta^t(s)\|^2_{\sigma+1}\big)
- \dot\mu^\eps_t(s)\|\eta^t(s)\|^2_{\sigma+1}\bigg]\d s \\
&= \ddt\|\eta^t\|^2_{\M_t^{\sigma,\eps}}
- \int_0^\infty \dot\mu^\eps_t(s)\|\eta^t(s)\|^2_{\sigma+1}\d s.
\end{align*}
Besides, from \eqref{T-diss} applied in the space $\M_t^{\sigma,\eps}$,
$$2\l \T_t\eta^t,\eta^t\r_{\M_t^{\sigma,\eps}}=\int_0^\infty (\mu^\eps_t)'(s)
\|\eta^t(s)\|^2_{\sigma+1}\d s.$$
In summary, we end up with
$$\ddt \|\eta^t \|^2_{\M_t^{\sigma,\eps}}=\int_0^\infty\big[\dot\mu^\eps_t(s)+(\mu^\eps_t)'(s)\big]
\|\eta^t(s)\|^2_{\sigma+1}\d s
+2\l\pt u(t),\eta^t\r_{\M_t^{\sigma,\eps}}.$$
As a byproduct of~\eqref{puppo}-\eqref{puppos}, we also infer that the map $t\mapsto \|\eta^t\|^2_{\M_t^{\sigma,\eps}}$
is absolutely continuous. This allows us to integrate the differential identity above, obtaining
\begin{align}
\label{INT-eps}
\|\eta^b \|^2_{\M_b^{\sigma,\eps}}-\|\eta^a \|^2_{\M_a^{\sigma,\eps}}&
-\int_a^b\int_0^\infty\big[\dot\mu^\eps_t(s)+(\mu^\eps_t)'(s)\big]
\|\eta^t(s)\|^2_{\sigma+1}\d s\,\d t\\\nonumber
&=2\int_a^b\l\pt u(t),\eta^t\r_{\M_t^{\sigma,\eps}}\d t.
\end{align}
In order to complete the proof, it suffices to pass to the limit in \eqref{INT-eps} as $\eps\to 0$.
Note first that, for any fixed $t$,
$$0\leq \|\eta^t \|^2_{\M_t^{\sigma}}-\|\eta^t \|^2_{\M_t^{\sigma,\eps}}
\leq\int_{0}^{2\eps} \mu_t(s)\|\eta^t(s)\|^2_{\sigma+1}\d s
+\int_{1/\eps}^\infty \mu_t(s)\|\eta^t(s)\|^2_{\sigma+1}\d s\to 0.$$
Analogously, for any fixed $t$ we verify that
$$\l\pt u(t),\eta^t\r_{\M_t^{\sigma,\eps}}\to\l\pt u(t),\eta^t\r_{\M_t^{\sigma}}.$$
Exploiting {\bf (M2)} and Lemma~\ref{lemma-bd-eta},
\begin{align*}
|\l\pt u(t),\eta^t\r_{\M_t^{\sigma,\eps}}|
&\leq \sqrt{\kappa(t)}\|\pt u(t)\|_{\sigma+1}\|\eta^t\|_{\M_t^{\sigma}}\\
&\leq \sqrt{\kappa(\tau)}\|\pt u(t)\|_{\sigma+1}\sqrt{K_\tau(t)}\|\eta^t\|_{\M_t^{\sigma}}\in L^1(a,b),
\end{align*}
and the Dominated Convergence Theorem entails
$$\int_a^b\l\pt u(t),\eta^t\r_{\M_t^{\sigma,\eps}}\d t\to \int_a^b\l\pt u(t),\eta^t\r_{\M_t^{\sigma}}\d t.$$
Thus,
denoting
\begin{align*}
g_\eps(t,s) &=-\big[\dot\mu^\eps_t(s)+(\mu^\eps_t)'(s)\big]\|\eta^t(s)\|^2_{\sigma+1},\\
g(t,s) &=-\big[\dot\mu_t(s)+\mu_t'(s)\big]\|\eta^t(s)\|^2_{\sigma+1},
\end{align*}
we are left to prove that
$$
\int_a^b\!\!\int_0^\infty g(t,s)\d s\,\d t
\leq \liminf_{\eps\to 0}\int_a^b\!\!\int_0^\infty g_\eps(t,s)\d s\,\d t.
$$
Indeed, in light of {\bf (M4)},
\begin{align*}
g_\eps(t,s)
&=-\big[\phi_\eps(s)\dot\mu_t(s)+\phi_\eps(s)\mu_t'(s)+\phi_\eps'(s)\mu_t(s)\big]\|\eta^t(s)\|^2_{\sigma+1}\\
&\geq -M(t)\mu_t(s)\|\eta^t(s)\|^2_{\sigma+1}-\frac{1}{\eps}\chi_{[\eps,2\eps]}(s)\mu_t(s)\|\eta^t(s)\|^2_{\sigma+1}.
\end{align*}
We infer from Lemma \ref{lemma-bd-eta} that
the first term in the right-hand side above belongs to $L^1((a,b)\times \R^+)$. Concerning the second one,
we observe that
$$
\|\eta^t(s)\|^2_{\sigma+1}\leq \Big(\int_0^s\|\partial_s \eta^t(y)\|_{\sigma+1}\d y\Big)^2
\leq s \int_0^s\|\partial_s \eta^t(y)\|_{\sigma+1}^2\d y,
$$
implying in turn, as $\mu_t(\cdot)$ is nonincreasing,
$$
\mu_t(s)\|\eta^t(s)\|^2_{\sigma+1}\leq s\int_0^s\mu_t(y)\|\partial_s \eta^t(y)\|_{\sigma+1}^2\d y
\leq s\|\ps \eta^t\|_{\M_t^\sigma}^2 \leq \Psi(u,\eta_\tau) sK_\tau(t),
$$
where \eqref{salva} is invoked in the last passage.
Besides, since we can assume $\eps\leq 1$,
$$\frac{s}{\eps}\chi_{[\eps,2\eps]}(s)\leq 2\chi_{[0,2]}(s).
$$
Collecting the two inequalities above, we end up with
$$\frac{1}{\eps}\chi_{[\eps,2\eps]}(s)\mu_t(s)\|\eta^t(s)\|^2_{\sigma+1}\leq
2\Psi(u,\eta_\tau)\chi_{[0,2]}(s)K_\tau(t)\in L^1((a,b)\times \R^+).$$
In conclusion, we found a (positive) function
$$\psi(t,s)=M(t)\mu_t(s)\|\eta^t(s)\|^2_{\sigma+1}
+2\Psi(u,\eta_\tau)\chi_{[0,2]}(s)K_\tau(t)\in L^1((a,b)\times\R^+)$$
satisfying
$$g_\eps(t,s)\geq -\psi(t,s).$$
We are in a position to apply Fatou's Lemma:
since $g_\eps(t,s)\to g(t,s)$ almost everywhere, the required inequality follows.
\end{proof}

By {\bf (M4)} we have a straightforward corollary.

\begin{corollary}
\label{cor-eta-norm}
Within the hypotheses of Lemma~\ref{lemma-eta-norm}, for all $\tau\leq a\leq b\leq T$,
we have the inequality
$$
\|\eta^b\|^2_{\M_b^{\sigma}}
\leq \|\eta^a\|^2_{\M_a^{\sigma}}
+M\int_a^b\|\eta^t\|^2_{\M_t^{\sigma}}\d t
+2\int_a^b\l\pt u(t),\eta^t\r_{\M_t^{\sigma}}\d t.
$$
\end{corollary}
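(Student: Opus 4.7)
The plan is to derive this as an immediate consequence of Lemma~\ref{lemma-eta-norm} by bounding the integrand $\dot\mu_t(s)+\mu'_t(s)$ pointwise using assumption \textbf{(M4)}. Specifically, Lemma~\ref{lemma-eta-norm} delivers
$$
\|\eta^b\|^2_{\M_b^{\sigma}}
\leq \|\eta^a\|^2_{\M_a^{\sigma}}
+\int_a^b\!\!\int_0^\infty\big[\dot\mu_t(s)+\mu'_t(s)\big]\|\eta^t(s)\|^2_{\sigma+1}\d s\,\d t
+2\int_a^b\l\pt u(t),\eta^t\r_{\M_t^{\sigma}}\d t,
$$
and the only work is to control the double integral in the middle.

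First, I note that the constant $M=\sup_{t\in[\tau,T]}M(t)$ appearing in the statement is finite, since \textbf{(M4)} provides $M(\cdot)$ bounded on bounded intervals and $[\tau,T]$ is bounded. Next, inside the double integral, at every $t\in[a,b]$ and almost every $s>0$, the inequality from \textbf{(M4)} yields
$$
\big[\dot\mu_t(s)+\mu'_t(s)\big]\|\eta^t(s)\|^2_{\sigma+1}
\leq M(t)\,\mu_t(s)\,\|\eta^t(s)\|^2_{\sigma+1}.
$$
Integrating in $s$ over $\R^+$ recognizes the right-hand side as $M(t)\|\eta^t\|^2_{\M_t^{\sigma}}$, and then bounding $M(t)\leq M$ and integrating in $t$ from $a$ to $b$ gives
$$
\int_a^b\!\!\int_0^\infty\big[\dot\mu_t(s)+\mu'_t(s)\big]\|\eta^t(s)\|^2_{\sigma+1}\d s\,\d t
\leq M\int_a^b\|\eta^t\|^2_{\M_t^{\sigma}}\d t.
$$
Plugging this bound back into the inequality supplied by Lemma~\ref{lemma-eta-norm} produces exactly the claimed estimate.

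There is no real obstacle here; the only minor point requiring attention is the measurability/integrability of $t\mapsto\|\eta^t\|^2_{\M_t^\sigma}$ on $[a,b]$, which is already guaranteed by Lemma~\ref{lemma-bd-eta} (the bound $\|\eta^t\|^2_{\M_t^\sigma}\leq \Phi(u,\eta_\tau)K_\tau(t)\in L^1(\tau,T)$), and the finiteness of $M$ just discussed. Thus the proof reduces to a single application of \textbf{(M4)} combined with Fubini, exactly as the phrase ``a straightforward corollary'' in the paper suggests.
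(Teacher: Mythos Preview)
Your argument is correct and is precisely the one the paper has in mind: the inequality in Lemma~\ref{lemma-eta-norm} plus the pointwise bound from \textbf{(M4)} immediately yields the result, with $M=\sup_{t\in[\tau,T]}M(t)$ finite by the boundedness of $M(\cdot)$ on bounded intervals. The paper in fact gives no explicit proof, simply remarking that the corollary follows from \textbf{(M4)}.
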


\begin{proof}[Proof of Theorem \ref{theorem-eta-norm}]
Choose two sequences
$$\eta_{\tau n} \in \C^1(\R^+, \h^{\sigma+1})\cap\D(\T_\tau)
\and u_n\in  \C^1([\tau,T],\h^{\sigma+1})$$
such that
\begin{align*}
\eta_{\tau n} \to \eta_\tau & \,\,\,\text{ in }\M_\tau^{\sigma},\\
u_n\to u & \,\,\,\text{ in } W^{1,\infty}(\tau,T;\h^{\sigma+1}),
\end{align*}
and define $\eta_n=\eta_n^t(s)$ as
$$
\eta_n^t(s) =
\begin{cases}
u_n(t) - u_n(t-s),                     &\, s \leq t- \tau,\\
\eta_{\tau n}(s-t+\tau) + u_n(t) - u_n(\tau), & \, s > t -\tau.
\end{cases}
$$ From Corollary~\ref{cor-eta-norm},
we know that
$$
\|\eta_n^b\|^2_{\M_b^{\sigma}}
\leq \|\eta_n^a\|^2_{\M_a^{\sigma}}
+M\int_a^b\|\eta_n^t\|^2_{\M_t^{\sigma}}\d t
+2\int_a^b\l\pt u_n(t),\eta_n^t\r_{\M_t^{\sigma}}\d t.
$$
All is needed is passing to the limit in the inequality above.
By means of Lemma~\ref{lemma-bd-eta} applied to the difference $\eta_n-\eta$ and
to $\eta_n$, we draw the estimate
$$\|\eta^t_n-\eta^t\|_{\M_t^\sigma}^2\leq \Phi(u_n-u,\eta_{\tau n}-\eta_\tau)K_\tau(t),$$
implying the pointwise convergence
$$\eta^t_n\to\eta^t\,\,\,\text{ in }\M_t^\sigma,\quad\forall t\in[a,b],$$
along with the control
$$\|\eta^t_n\|_{\M_t^\sigma}^2\leq \sup_n\Phi(u_n,\eta_{\tau n})\,K_\tau(t)\in L^1(a,b).$$
In particular,
$$\|\eta^t_n \|^2_{\M^{\sigma}_t}\to \|\eta^t \|^2_{\M^{\sigma}_t},\quad\forall t\in[a,b],$$
and, by the Dominated Convergence Theorem,
$$
\int_a^b\|\eta_n^t\|^2_{\M_t^{\sigma}}\d t\to\int_a^b\|\eta^t\|^2_{\M_t^{\sigma}}\d t.
$$
In order to establish the remaining convergence
$$\int_a^b\l\pt u_n(t),\eta_n^t\r_{\M_t^{\sigma}}\d t\to \int_a^b\l\pt u(t),\eta^t\r_{\M_t^{\sigma}}\d t,$$
we argue as in the proof of Lemma \ref{lemma-eta-norm}. Indeed,
$$\l\pt u_n(t),\eta_n^t\r_{\M_t^{\sigma}}\to \l\pt u(t),\eta^t\r_{\M_t^{\sigma}},\quad\text{for a.e. } t\in[a,b],$$
and
$$|\l\pt u_n(t),\eta_n^t\r_{\M_t^{\sigma}}|\leq \sqrt{\kappa(t)}\|\pt u_n(t)\|_{\sigma+1}\|\eta_n^t\|_{\M_t^{\sigma}}
\leq C K_\tau(t)\in L^1(a,b),$$
having set
$$C=\sup_n\Big[\sqrt{\kappa(\tau)\Phi(u_n,\eta_{\tau n})}\,\|\pt u_n\|_{L^\infty(\tau,T; \h^{\sigma+1})}\Big].$$
A further application of the Dominated Convergence Theorem will do.
This finishes the proof of Theorem~\ref{theorem-eta-norm}.
\end{proof}
%%%%%%%%%%%%%%%%%%%%%%%%%%%%%%%%%%%%%%%

%%%%%%%%%%%%%%%%%%%%%%%%%%%%%%%%%%%%%%%
\section{Existence of Solutions}
\label{sec-existence}

\noindent
We are now ready to prove the existence result.

\begin{theorem}
\label{thm-ex-un-wak}
For every $T>\tau \in \R$ and every initial datum
$z_\tau = (u_\tau, v_\tau, \eta_\tau) \in \H_\tau$,
problem \eqref{eqn-mem}-\eqref{in-cond} admits at least a solution $z(t)$
on $[\tau, T]$. Moreover, $z(t)\in\H_t$ for every $t$ and
$$\sup_{t\in[\tau,T]}\|z(t)\|_{\H_t}\leq C,$$
for some $C>0$ depending only on $T,\tau$ and the size of the initial datum.
\end{theorem}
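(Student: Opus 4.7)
The plan is a Faedo--Galerkin approximation in which the history variable $\eta$ is never discretized: at each level $\eta_n^t$ is reconstructed from the approximate displacement $u_n$ through the representation formula \eqref{eta-rappr}, and the analytic cornerstone will be Theorem \ref{theorem-eta-norm}, which plays the role of the differential inequality for $\|\eta^t\|^2_{\M_t}$ that is unavailable because the geometry of $\M_t$ changes with $t$. I fix an orthonormal basis $\{e_j\}_{j\geq 1}$ of $\h$ made of eigenvectors of $A$, set $V_n=\sp\{e_1,\dots,e_n\}$, and approximate the initial datum by $(u_\tau^n,v_\tau^n,\eta_\tau^n)\to(u_\tau,v_\tau,\eta_\tau)$ in $\H_\tau$ with $u_\tau^n,v_\tau^n\in V_n$ and $\eta_\tau^n\in \C^1(\R^+,\h^1)\cap\D(\T_\tau)$. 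I then seek $u_n$ valued in $V_n$ solving the Galerkin projection of \eqref{eqn-mem}, with $\eta_n^t$ built from $u_n$ and $\eta_\tau^n$ via \eqref{eta-rappr}. This produces a finite-dimensional Volterra-type integro-differential system, locally solvable by Carath\'eodory--Picard theory thanks to \eqref{hp1-f} and \textbf{(M3)}.

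To obtain global existence on $[\tau,T]$ and compactness for the limit passage, I combine two a priori estimates. Testing the Galerkin equation against $\pt u_n(t)$ and integrating on $[\tau,t]$ yields the energy identity
$$\EE_n(t)+2\int_\tau^t \l\pt u_n(s),\eta_n^s\r_{\M_s}\d s=\EE_n(\tau)+2\int_\tau^t \l g,\pt u_n(s)\r\d s,$$
with $\EE_n(t)=\|\pt u_n(t)\|^2+\|u_n(t)\|_1^2+2\int_\Omega F(u_n(t))\,\d x$ and $F'=f$. Since $u_n$ is smooth in time and $\eta_\tau^n\in\C^1(\R^+,\h^1)\cap\D(\T_\tau)$, Theorem \ref{theorem-eta-norm} with $\sigma=0$ gives
$$\|\eta_n^t\|^2_{\M_t}\leq \|\eta_\tau^n\|^2_{\M_\tau}+M\int_\tau^t \|\eta_n^s\|^2_{\M_s}\d s+2\int_\tau^t \l\pt u_n(s),\eta_n^s\r_{\M_s}\d s.$$
Adding the identity and the inequality, the cross term $\l\pt u_n,\eta_n^s\r_{\M_s}$ cancels exactly; the dissipation condition \eqref{phi1} then provides a lower bound $2\int_\Omega F(u_n)\geq -(1-\delta)\|u_n\|_1^2-C$ that turns $\EE_n(t)$ into an effective control of $\|\pt u_n(t)\|^2+\|u_n(t)\|_1^2$. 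A Young estimate on the forcing term and Gronwall's lemma then yield a uniform bound
$$\sup_{t\in[\tau,T]}\big[\|\pt u_n(t)\|^2+\|u_n(t)\|_1^2+\|\eta_n^t\|^2_{\M_t}\big]\leq C,$$
and from the Galerkin equation itself, combined with \eqref{hp1-f} and the embedding $\h^1\subset L^6(\Omega)$, an $L^\infty(\tau,T;\h^{-1})$ bound on $\ptt u_n$.

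The limit passage is then standard. Weak-$\ast$ compactness extracts $u_n\rightharpoonup^* u$ in $L^\infty(\tau,T;\h^1)$ and $\pt u_n\rightharpoonup^* \pt u$ in $L^\infty(\tau,T;\h)$, and Aubin--Lions gives strong convergence $u_n\to u$ in $\C([\tau,T],L^p(\Omega))$ for any $p<6$, sufficient to pass to the limit in the nonlinearity through \eqref{hp1-f}. The representation formula \eqref{eta-rappr} is linear in $u$, and Lemma \ref{lemma-bd-eta} applied to the differences $u_n-u$ and $\eta_\tau^n-\eta_\tau$ gives $\eta_n^t\to\eta^t$ in $\M_t$ for every $t\in[\tau,T]$; this both identifies the limit $\eta^t$ and, together with Remark \ref{rem-bd-eta}, ensures $\eta^t\in\M_t$ at every $t$ with the uniform bound inherited above. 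Pointwise convergence in $\M_t$ together with dominated convergence in $t$ allow one to pass to the limit in the nonautonomous memory term $\int_0^\infty\mu_t(s)\l\eta_n^t(s),\varphi\r_1\d s$ against any $\varphi\in\h^1$. The main obstacle throughout is precisely the time-dependence of $\M_t$: one cannot differentiate $\|\eta^t\|^2_{\M_t}$, so the direct multiplier argument fails, and everything rests on the fact that Theorem \ref{theorem-eta-norm} produces the very same coupling term $\l\pt u,\eta\r_{\M_t}$ as the one arising from testing the equation with $\pt u$, so that the two cancel exactly when added.
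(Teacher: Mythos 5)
Your architecture coincides with the paper's: Galerkin in the displacement only, with the history $\eta_n^t$ reconstructed from $u_n$ via the representation formula, Theorem \ref{theorem-eta-norm} with $\sigma=0$ serving as the surrogate for the unavailable differential identity for $\|\eta^t\|^2_{\M_t}$, exact cancellation of the coupling term $\l\pt u_n,\eta_n\r_{\M_t}$, and Gronwall. (The extra regularity you impose on the approximate histories $\eta_\tau^n$ is unnecessary, since Theorem \ref{theorem-eta-norm} is already stated for $\eta_\tau\in\M_\tau$; it is harmless.)

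There is, however, one genuine gap in your limit passage. You claim that Lemma \ref{lemma-bd-eta}, applied to $u_n-u$ and $\eta_{\tau n}-\eta_\tau$, yields $\eta_n^t\to\eta^t$ \emph{strongly} in $\M_t$ for every $t$. That lemma bounds $\|\eta_n^t-\eta^t\|^2_{\M_t}$ by $\Phi(u_n-u,\eta_{\tau n}-\eta_\tau)\,K_\tau(t)$, and $\Phi$ contains the term $\|u_n-u\|^2_{L^\infty(\tau,T;\h^1)}$. Aubin--Lions only gives $u_n\to u$ in $\C([\tau,T],\h)$ (hence in $\C([\tau,T],L^p(\Omega))$, $p<6$, by interpolation), while the $\h^1$-convergence is merely weak-$*$; so $\Phi(u_n-u,\cdot)$ need not vanish and the strong $\M_t$-convergence does not follow. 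The paper repairs exactly this point by descending one rung: Lemma \ref{lemma-bd-eta} with $\sigma=-1$, where $\C([\tau,T],\h)$-convergence suffices, gives $\eta_n^t\to\eta^t$ strongly in $\M_t^{-1}$; combined with the uniform bound of $\eta_n^t$ in $\M_t$, this identifies $\eta^t$ as the weak limit in $\M_t$, which is enough both for $\eta^t\in\M_t$ with the stated bound (weak lower semicontinuity) and, together with a Fubini-type argument using only the weak-$*$ convergence of $t\mapsto\l u_n(t)-u(t),\varphi\r_1$ in $L^\infty(\tau,T)$, for passing to the limit in the memory term. A secondary inaccuracy: the memory term is controlled in $\h^{-1}$ by $\sqrt{\kappa(t)}\,\|\eta_n^t\|_{\M_t}\leq C\sqrt{K_\tau(t)}$, and $K_\tau$ is only assumed summable, so $\ptt u_n$ is bounded in $L^1(\tau,T;\h^{-1})$, not $L^\infty$; this weaker bound is what Definition \ref{def-sol} requires and is all you need.
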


The proof of the theorem is based on a Galerkin procedure,
where the first step consists in looking for smooth solutions to suitable approximating problems on
finite-dimensional spaces.

%%%%%%%%%%%%%%%%%%%%%%%%%%%%%%%%%%%%%
\subsection{Galerkin approximations}
Let $\{w_n\}$ be an orthonormal basis of $\h$ which is also orthogonal
in $\h^1$.
For every $n \in \N$, we define the finite-dimensional subspace
$$\h_n = \sp\{ w_1, \dots, w_n \}\subset \h^1$$
and we denote by $P_n:\h\to\h_n$ the orthogonal projection onto $\h_n$.
We approximate the initial datum
$z_\tau = (u_\tau, v_\tau, \eta_\tau)$ with a sequence
$z_{\tau n} = (u_{\tau n}, v_{\tau n}, \eta_{\tau n})$, where
\begin{align}
\label{proj-un}
u_{\tau n} & = P_n u_\tau\to u_\tau \!\quad\text{in }\h^1, \\
\label{proj-vn}
v_{\tau n} & = P_n v_\tau\to v_\tau \quad\text{in }\h, \\
\label{proj-etan}
\eta_{\tau n} & = P_n\eta_\tau \to \eta_\tau \quad\text{in }\M_\tau.
\end{align}
For every $n \in \N$, we look for $T_n \in (\tau,T]$ and
$$u_n : [\tau,T_n] \to \h_n$$
satisfying, for every test function $\varphi\in \h_n$ and every $t \in [\tau,T_n]$,
\begin{equation}
\label{weak-eqn-approx}
\l \ptt u_n(t),\varphi\r + \l u_n(t),\varphi\r_1
+ \int_0^\infty \mu_t(s)\l\eta^t_n(s),\varphi\r_1 \d s
+ \l f(u_n(t)),\varphi\r = \l g,\varphi\r,
\end{equation}
where
\begin{equation}
\label{eta-n-rappr}
\eta^t_n(s) =
\begin{cases}
u_n(t) - u_n(t-s),                          &s \leq t- \tau,\\
\eta_{\tau n}(s-t+\tau) + u_n(t) - u_{\tau n}, &s > t -\tau,
\end{cases}
\end{equation}
along with the initial conditions
\begin{equation}
\label{in-un}
\begin{cases}
u_n(\tau)=u_{\tau n},\\
\pt u_n(\tau)=v_{\tau n}.
\end{cases}
\end{equation}

\begin{lemma}
\label{l-galerkin}
For every $n \in \N$, there exist $T_n \in (\tau,T]$ and
a pair $(u_n,\eta_n)$ satisfying
\eqref{weak-eqn-approx}-\eqref{in-un}, where
$u_n$ is of the form
$$
u_n(t) = \sum_{j=1}^n a_j^n(t)w_j,\quad
a_j^n\in \C^2([\tau,T_n]).$$
\end{lemma}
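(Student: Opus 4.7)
The plan is to reduce \eqref{weak-eqn-approx} to a finite system of scalar integro-differential equations and then apply a fixed-point argument for local existence. Writing $u_n(t)=\sum_{j=1}^n a_j^n(t)\,w_j$ and recalling that $\{w_j\}$ is orthonormal in $\h$ and orthogonal in $\h^1$, each $w_j$ is an eigenfunction of $A$ with eigenvalue $\lambda_j=\|w_j\|_1^2$. Testing \eqref{weak-eqn-approx} against $\varphi=w_i$ and substituting the explicit representation \eqref{eta-n-rappr}, after splitting the memory integral over $\{s\leq t-\tau\}$ and $\{s>t-\tau\}$, the equation for each coefficient takes the form
\begin{equation*}
(a_i^n)''(t)+\lambda_i\bigl(1+\kappa(t)\bigr)a_i^n(t)=\lambda_i\,\L_i(t;a^n)+\Theta_i(t)-F_i\bigl(a^n(t)\bigr)+\l g,w_i\r,
\end{equation*}
where $\L_i(t;a^n)=\int_0^{t-\tau}\mu_t(s)\,a_i^n(t-s)\,\d s$ is a Volterra-type memory operator in the unknown, $\Theta_i(t)$ is a known forcing gathering the contributions of $\eta_{\tau n}$ and $u_{\tau n}$, and $F_i(a)=\l f\bigl(\sum_j a_j w_j\bigr),w_i\r$ is the projection of the nonlinearity.

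Next, setting $x^n=(a^n,(a^n)')$ and integrating once in time, the system recasts as a fixed-point equation for $x^n$ on the Banach space $\C([\tau,\tau+\delta];\R^{2n})$ with $\delta>0$ sufficiently small. Contractivity of the associated Volterra operator follows from three facts: (a) $F$ is locally Lipschitz, by $f\in\C^1(\R)$ combined with \eqref{hp1-f} and the equivalence of norms on the finite-dimensional space $\h_n$; (b) $\L_i(t;\cdot)$ is affine and, by the change of variable $r=t-s$, satisfies $|\L_i(t;a^n)-\L_i(t;\tilde{a}^n)|\leq \kappa(t)\,\|a^n-\tilde{a}^n\|_{\C([\tau,t])}$ with $\kappa(t)$ controlled by $\kappa(\tau)$ via \eqref{kappa-t-tau}; (c) the forcing $\Theta_i$ is independent of the unknown and continuous in $t$. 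A standard Banach contraction argument on a short interval then produces a unique $a_j^n\in\C^1([\tau,\tau+\delta])$. A bootstrap, using the continuity of the right-hand side of the ODE once $a^n$ is known to be $\C^1$, promotes the solution to $\C^2([\tau,T_n])$ for some $T_n\in(\tau,T]$.

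The main technical point is establishing continuity in $t$ of the memory operator $\L_i$ and of the forcing $\Theta_i$ when $\mu_t$ is only assumed to be summable in $s$, hence potentially singular at $s=0^+$, with a nontrivial $t$-dependence. After the change of variable, the integral $\int_\tau^t\mu_t(t-r)\,a_i^n(r)\,\d r$ concentrates the possible singularity at the moving endpoint $r=t$; I would control the increment corresponding to a small $h>0$ by splitting off the slice $[t,t+h]$ and dominating the remaining piece by $\mu_\tau$ through the majorization \textbf{(M2)}, then invoking the local $L^\infty$-bound of \textbf{(M3)} and the dominated convergence theorem to pass to the limit $h\to 0$. The same ingredients handle $\Theta_i$ and yield uniform bounds that make the contraction constant depend only on $T$, $\tau$ and the size of the initial datum, so that $\delta$ can indeed be chosen positive, completing the local construction.
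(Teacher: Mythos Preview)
Your proposal is correct and follows exactly the route the paper indicates: the paper in fact omits the proof entirely, merely remarking that \eqref{weak-eqn-approx} reduces to a system of $n$ integro-differential equations in the unknowns $a_j^n$, and that local existence follows from a classical ODE result since $f$ is locally Lipschitz. You are supplying precisely those omitted details, including the one genuinely delicate point (continuity in $t$ of the memory operator when $\mu_t(\cdot)$ may be singular at $s=0^+$), which you correctly propose to handle via the domination {\bf (M2)} and the local bounds {\bf (M3)}.
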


The proof is completely standard, and therefore omitted.
It is enough to note that \eqref{weak-eqn-approx} translates into
a system of $n$ integro-differential equations
in the unknowns $a_j^n$, and the existence (and uniqueness) of a local solution is guaranteed
by a classical ODEs result, owing to the fact that the nonlinearity $f$ is locally Lipschitz.

\medskip
According to Lemma \ref{l-galerkin}, we denote by
$$z_n(t) = (u_n(t), \pt u_n(t), \eta_n^t)$$ the (local) solution to the approximating problem at time $t$.
In what follows, $C$ will denote a generic positive constant and $\Q: \R^+ \to \R^+$ a generic nondecreasing
positive function,
both (possibly) depending only on $\tau$, $T$ and the structural parameters of the problem,
but {\it independent} of $n$.

%%%%%%%%%%%%%%%%%%%%%%%%%%%%%%%%%%%%%
\subsection{Energy estimates}
The crucial step is finding suitable a priori estimates
for the approximate solution $z_n$.

\begin{lemma}
\label{lemma-bdd-n-1}
Let $\|z_{\tau}\|_{\H_\tau}\leq R$ for some $R\geq 0$. Then
$z_n(t)\in\H_t$ for every $t$ and
$$
\sup_{t\in[\tau,T]}\|z_n(t)\|_{\H_t}\leq \Q(R).
$$
\end{lemma}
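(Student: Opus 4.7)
The plan is to derive a Gronwall-type integral inequality for the natural energy of $z_n$ on the (possibly short) existence interval $[\tau,T_n]$ afforded by Lemma~\ref{l-galerkin}, and then close a continuation argument to obtain $T_n=T$. First, I would test \eqref{weak-eqn-approx} with $\varphi=\pt u_n(t)\in\h_n$, which is admissible since $u_n\in\C^2([\tau,T_n],\h^1)$. All terms other than the memory one are exact time derivatives, so integrating on $[\tau,t]$ yields the identity
$$
\EE_n(t)\;=\;\EE_n(\tau)\;-\;2\int_\tau^t\l\pt u_n(s),\eta_n^s\r_{\M_s}\,\d s,
$$
where, with $F(u):=\int_0^u f(r)\,\d r$,
$$
\EE_n(t):=\|\pt u_n(t)\|^2+\|u_n(t)\|_1^2+2\l F(u_n(t)),1\r-2\l g,u_n(t)\r.
$$

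The crucial step is to invoke Theorem~\ref{theorem-eta-norm} with $\sigma=0$, whose hypotheses are met thanks to $u_n\in W^{1,\infty}(\tau,T_n;\h^1)$ and $\eta_{\tau n}\in\M_\tau$. It delivers
$$
\|\eta_n^t\|_{\M_t}^2\;\leq\;\|\eta_{\tau n}\|_{\M_\tau}^2\;+\;M\!\int_\tau^t\|\eta_n^s\|_{\M_s}^2\,\d s\;+\;2\int_\tau^t\l\pt u_n(s),\eta_n^s\r_{\M_s}\,\d s.
$$
Summing with the previous identity, the cross integrals cancel \emph{exactly}, leaving
$$
\EE_n(t)+\|\eta_n^t\|_{\M_t}^2\;\leq\;\EE_n(\tau)+\|\eta_{\tau n}\|_{\M_\tau}^2+M\!\int_\tau^t\|\eta_n^s\|_{\M_s}^2\,\d s.
$$
This cancellation is where I expect all the action to be: it substitutes for the pointwise differential identity for $\|\eta_n^t\|_{\M_t}^2$, which is simply not available in the time-dependent kernel framework, and it is made possible precisely by the integral formulation of Section~\ref{sec-key}.

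To close the estimate, I would use \eqref{phi1} to produce $\lambda\in(0,\lambda_1)$ and $c\geq 0$ with $F(u)\geq -\tfrac{\lambda}{2}u^2-c$, so that the Poincar\'e inequality together with a Young absorption of $|2\l g,u_n\r|$ yields a coercive lower bound
$$
\EE_n(t)+\|\eta_n^t\|_{\M_t}^2\;\geq\;\nu\big[\|\pt u_n(t)\|^2+\|u_n(t)\|_1^2+\|\eta_n^t\|_{\M_t}^2\big]-C,\qquad \nu>0.
$$
On the other side, the growth restriction \eqref{hp1-f} and the embedding $\h^1\subset L^4(\Omega)$ control $|\l F(u_{\tau n}),1\r|$ polynomially in $\|u_{\tau n}\|_1$, whence $\EE_n(\tau)+\|\eta_{\tau n}\|_{\M_\tau}^2\leq\Q(R)$ via \eqref{proj-un}--\eqref{proj-etan}. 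Collecting everything gives
$$
\|\pt u_n(t)\|^2+\|u_n(t)\|_1^2+\|\eta_n^t\|_{\M_t}^2\;\leq\;\Q(R)+C\!\int_\tau^t\!\Big[\|\pt u_n(s)\|^2+\|u_n(s)\|_1^2+\|\eta_n^s\|_{\M_s}^2\Big]\d s,
$$
and Gronwall's lemma produces the desired uniform bound on $[\tau,T_n]$. Since this a priori estimate prevents the coefficients $a_j^n(t)$ from blowing up, a standard continuation argument iterating Lemma~\ref{l-galerkin} extends $u_n$ to the whole of $[\tau,T]$, finishing the proof.
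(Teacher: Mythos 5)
Your proposal is correct and follows essentially the same route as the paper: test the Galerkin equation with $\pt u_n$, integrate in time, invoke Theorem~\ref{theorem-eta-norm} with $\sigma=0$ so that the memory cross terms cancel, and close with the coercivity from \eqref{phi1}, the growth bound \eqref{hp1-f} at $t=\tau$, and Gronwall. The only (harmless) cosmetic difference is that you absorb $-2\l g,u_n\r$ into the energy functional, whereas the paper estimates $2\l g,\pt u_n\r\leq C+CE_n$ pointwise before integrating.
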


\begin{proof}
We preliminarily observe that,
owing to \eqref{proj-un}-\eqref{proj-etan},
\begin{equation}
\label{prpr}
\|z_{\tau n}\|_{\H_\tau}\leq \|z_\tau\|_{\H_\tau}\leq R.
\end{equation}
For $t \in [\tau,T_n]$, we define the energy functional
$$E_n(t)=\|u_n(t)\|^2_1+\|\pt u_n(t)\|^2+ 2\l F(u_n(t)),1\r,$$
where
$$F(u)=\int_0^u f(s)\d s.$$
Exploiting \eqref{hp1-f},
$$2\l F(u_n),1\r \leq C\big(1+\|u_n\|_1^4\big).$$
Besides, condition~\eqref{phi1}
implies that
$$
2\l F(u_n),1\r\geq -(1-\theta)\|u_n\|^2_1-C,
$$
for some $0<\theta<1$.
Thus we have the two-side control
\begin{equation}
\label{contr-E-Z-n}
\theta\big[\|u_n(t)\|^2_1+\|\pt u_n(t)\|^2\big]-C\leq E_n(t)
\leq \Q(\|u_n(t)\|_1+\|\pt u_n(t)\|).
\end{equation}
Testing \eqref{weak-eqn-approx} with
$\varphi=\pt u_n$, we draw the equality
$$\ddt E_n + 2\l\eta_n, \pt u_n\r_{\M_t} = 2\l g,\pt u_n\r.$$
Since by \eqref{contr-E-Z-n}
$$2\l g,\pt u_n\r\leq 2\|g\|\|\pt u_n\|\leq C+CE_n,$$
an integration on $[\tau, t]$ with $t<T_n$ yields
$$
E_n(t)+2\int_\tau^t \l\eta_n^y,\pt u_n(y)\r_{\M_y}\d y\leq E_n(\tau)+C+C\int_\tau^t E_n(y)\d y.
$$
Knowing that
$u_n \in W^{1,\infty}(\tau,T_n;\h^1)$,
$\eta_{\tau n} \in \M_\tau$
and $\eta_n$ fulfills \eqref{eta-n-rappr}, we are allowed to apply
Theorem \ref{theorem-eta-norm} for $\sigma=0$, so obtaining
$$\|\eta_n^t\|^2_{\M_t}\leq \|\eta_n^\tau\|^2_{\M_\tau} + M
\int_\tau^t\|\eta_n^y\|^2_{\M_y}\d y
+2\int_\tau^t \l\eta_n^y,\pt u_n(y)\r_{\M_y}\d y.$$
Therefore, setting
$$\EE_n(t)=E_n(t)+\|\eta_n^t\|^2_{\M_t}$$
and adding the latter two integral inequalities, using again \eqref{contr-E-Z-n} we end up with
$$\EE_{n}(t) \leq  \EE_n(\tau)+C+C\int_\tau^t \EE_n(y)\d y.$$
The claim follows from the Gronwall Lemma and a further application of \eqref{contr-E-Z-n},
together with \eqref{prpr}.
\end{proof}

Since the estimates for $(u_n,\pt u_n, \eta_n)$ do not depend on $n$,
we conclude that the solutions to the approximate problems are global, namely,
$$T_n=T,\quad\forall n\in\N.$$

%%%%%%%%%%%%%%%%%%%%%%%%%%%%%%%%%%%%%%%%%%%%
\subsection{Passage to the limit} From Lemma~\ref{lemma-bdd-n-1}
we learn that
$$
u_n \,\,\,\text{is bounded in }\, L^\infty(\tau,T;\h^1)\cap W^{1,\infty}(\tau,T;\h).
$$
Hence, there exists $u \in L^\infty(\tau,T;\h^1)\cap W^{1,\infty}(\tau,T;\h)$
such that, up to a subsequence,
\begin{align}
\label{conv-un-u}
u_n \xrightarrow{{\rm w}^*} u & \quad\text{in } L^\infty(\tau,T;\h^1),\\
\label{conv-dot-un-v}
\pt u_n \xrightarrow{{\rm w}^*} \pt u & \quad\text{in } L^\infty(\tau,T;\h).
\end{align}
By the classical Simon-Aubin compact embedding~\cite{SIM}
$$L^\infty(\tau,T;\h^1) \cap W^{1,\infty}(\tau,T;\h) \Subset \C([\tau,T],\h),$$
we deduce (up to a further subsequence)
\begin{equation}
\label{uco}
u_n\to u\quad\text{in } \C([\tau,T],\h),
\end{equation}
along with the pointwise convergence
$$u_n \to u \quad\text{a.e. in } [\tau,T] \times \Omega.$$
Thanks to the continuity of $f$, this also yields
\begin{equation}
\label{lif}
f(u_n) \to f(u) \quad\text{a.e. in } [\tau,T] \times \Omega.
\end{equation}
At this point, having $u$ and $\eta_\tau$, we merely define
the function $\eta^t$ for $t \in [\tau,T]$ by \eqref{ETA}.

\begin{remark}
\label{remetareggy}
Since $u\in L^\infty(\tau,T;\h^1)$ and $u_\tau\in\h^1$, recasting word by word the proof of Lemma \ref{lemma-bd-eta}
we find the bound
\begin{equation}
\label{etataub}
\eta\in L^\infty(\tau,T;\M_\tau).
\end{equation}
In turn, we infer from \eqref{norme-Mt-tau} that
$\eta^t\in\M_t$ for almost every $t\in [\tau,T]$.
\end{remark}

\begin{lemma}
The function $z(t)=(u(t), \pt u(t), \eta^t)$ fulfills point {\rm (iv)} of Definition \ref{def-sol}.
\end{lemma}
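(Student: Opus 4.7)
The plan is to fix a level $m\in\N$ and a smooth temporal test function $\psi\in\C^\infty_c((\tau,T))$, take any $\varphi\in\h_m$, multiply the approximate identity \eqref{weak-eqn-approx} by $\psi(t)$ and integrate on $[\tau,T]$; then pass to the limit $n\to\infty$ for $n\geq m$ in each of the five resulting terms. Once the identity is established for every such $\varphi$ and $\psi$, the density of $\bigcup_m\h_m$ in $\h^1$ together with the arbitrariness of $\psi$ will yield point (iv) for almost every $t$.

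Four of the five terms are soft. The second-order term is integrated by parts in $t$ (boundary contributions vanish by the support of $\psi$), giving $-\int_\tau^T\l\pt u_n(t),\varphi\r\psi'(t)\,\d t$, whose limit is delivered by \eqref{conv-dot-un-v}. The elastic term $\int_\tau^T\l u_n(t),\varphi\r_1\psi(t)\,\d t$ converges via \eqref{conv-un-u}, since $t\mapsto\psi(t)\varphi$ belongs to $L^1(\tau,T;\h^1)$. For the nonlinear term, \eqref{hp1-f} together with $f(0)=0$ gives $|f(u)|\leq C(|u|+|u|^3)$; combined with $\h^1\subset L^6(\Omega)$ and the uniform bound of Lemma~\ref{lemma-bdd-n-1}, this yields $\sup_n\|f(u_n)\|_{L^\infty(\tau,T;\h)}<\infty$, and the pointwise convergence \eqref{lif} upgrades this to weak convergence $f(u_n)\to f(u)$ in $L^2([\tau,T]\times\Omega)$, which is enough to pass to the limit.

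The main obstacle is the memory term. Inserting the piecewise representation \eqref{eta-n-rappr} into $\int_\tau^T\int_0^\infty\mu_t(s)\l\eta_n^t(s),\varphi\r_1\psi(t)\,\d s\,\d t$ and applying Fubini, the integral splits into four contributions: a ``mass'' piece $\int_\tau^T\kappa(t)\l u_n(t),\varphi\r_1\psi(t)\,\d t$; a convolution-type piece $-\int_\tau^T\l u_n(r),\varphi\r_1\chi(r)\,\d r$ with kernel $\chi(r)=\int_r^T\mu_t(t-r)\psi(t)\,\d t$; a boundary piece $-\int_\tau^T\kappa_\tau(t)\l u_{\tau n},\varphi\r_1\psi(t)\,\d t$ involving $\kappa_\tau(t)=\int_{t-\tau}^\infty\mu_t(s)\,\d s$; and a past-history piece $\int_0^\infty\l\eta_{\tau n}(s),\varphi\r_1\rho(s)\,\d s$ with $\rho(s)=\int_\tau^T\mu_t(s+t-\tau)\psi(t)\,\d t$. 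Using \textbf{(M2)}, \eqref{kappa-t-tau} and the summability of $K_\tau$ on $[\tau,T]$, one checks that $\kappa\psi$, $\chi$ and $\kappa_\tau\psi$ lie in $L^1(\tau,T)$, while the monotonicity of $\mu_\tau$ together with \textbf{(M2)} gives $\rho(s)\leq C\mu_\tau(s)$ pointwise, the latter making $\rho$ the density of a bounded linear functional on $\M_\tau$. Consequently the first two pieces converge by weak-$*$ convergence \eqref{conv-un-u}, the third by the strong convergence \eqref{proj-un} of $u_{\tau n}$ in $\h^1$, and the fourth by the strong convergence \eqref{proj-etan} of $\eta_{\tau n}$ in $\M_\tau$. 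Reassembling, these four limits reproduce exactly the memory integral built from $\eta^t$ as defined by \eqref{ETA}.

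Finally, the density of $\bigcup_m\h_m$ in $\h^1$ extends the identity to every $\varphi\in\h^1$: all terms other than $\ptt u$ define $\h^{-1}$-valued functionals uniformly controlled by the a priori estimates of Lemma~\ref{lemma-bdd-n-1}, so solving the equation for $\ptt u$ yields $\ptt u\in L^\infty(\tau,T;\h^{-1})$ (hence $L^1$), and by the arbitrariness of $\psi$ the identity becomes the pointwise-a.e.\ weak formulation (iv).
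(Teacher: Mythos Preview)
Your proof is correct and follows essentially the same route as the paper's: the same Fubini-based decomposition of the memory integral into a mass piece, a convolution piece, and initial-data pieces, the same use of \textbf{(M2)} and the monotonicity of $\mu_\tau$ to place the resulting weights in $L^1(\tau,T)$ (respectively, to dominate $\rho$ by $C\mu_\tau$), and the same pairing of weak-$*$ convergence of $u_n$ with strong convergence of $u_{\tau n}$ and $\eta_{\tau n}$. The only cosmetic difference is that the paper subtracts first and shows the \emph{difference} integral tends to zero, whereas you pass each piece to its limit; by linearity these are the same computation.

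One small overclaim to correct in your final paragraph: you assert $\ptt u\in L^\infty(\tau,T;\h^{-1})$, but under the general assumption \textbf{(M2)} the function $K_\tau$ is only summable on $[\tau,T]$, not bounded, so the $\h^{-1}$-norm of the memory term $\int_0^\infty\mu_t(s)A\eta^t(s)\,\d s$ is controlled by $\sqrt{\kappa(t)}\,\|\eta^t\|_{\M_t}\leq C K_\tau(t)$, which is a priori only $L^1$ in $t$. This is precisely what the paper establishes separately via Lemma~\ref{lemma-L1}, and $L^1(\tau,T;\h^{-1})$ is all that Definition~\ref{def-sol} requires; your parenthetical ``hence $L^1$'' is the correct conclusion, just not via $L^\infty$.
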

\begin{proof}
Let $\varphi \in \h_m$ be fixed. Then, for every $n \geq m$,
we have
$$\l\ptt u_n,\varphi\r + \l u_n,\varphi\r_1
+ \int_0^\infty \mu_t(s)\l \eta_n(s), \varphi\r_1 \d s
+ \l f(u_n),\varphi \r = \l g,\varphi\r.$$
Multiplying the above equality by an arbitrary $\zeta \in \C^\infty_{\rm c}([\tau,T])$
and integrating on the interval $[\tau,T]$
we are led to
\begin{align*}
&-\int_\tau^T\dot\zeta( t)\l\pt u_n( t),\varphi\r\d  t
+ \int_\tau^T\zeta( t)\l u_n( t),\varphi\r_1\d  t\\
&\quad+\int_\tau^T\zeta( t)\int_0^\infty\mu_ t(s)\l\eta^ t_n(s),\varphi\r_1\d s\,\d  t
+ \int_\tau^T\zeta( t)\l f(u_n( t)),\varphi\r\d  t\\
&=\l g,\varphi\r\int_\tau^T\zeta( t)\d  t.
\end{align*}
We claim that we can pass to the limit in this equality, getting
\begin{align}
\label{pttl}
&-\int_\tau^T\dot\zeta( t)\l\pt u( t),\varphi\r\d  t
+ \int_\tau^T\zeta( t)\l u( t),\varphi\r_1\d  t\\ \nonumber
&\quad+\int_\tau^T\zeta( t)\int_0^\infty\mu_ t(s)\l\eta^ t(s),\varphi\r_1\d s\,\d  t
+ \int_\tau^T\zeta( t)\l f(u( t)),\varphi\r\d  t\\ \nonumber
&=\l g,\varphi\r\int_\tau^T\zeta( t)\d  t.
\end{align}
Owing to the density of $\h_m$ in $\h^1$ as $m\to\infty$, this finishes the proof of the lemma.

\smallskip
Coming to the claim, we see that the only nontrivial terms to control are the
nonlinear one containing $f(u_n)$ and
\begin{equation}
\label{dite}
\int_\tau^T\zeta( t)\int_0^\infty\mu_ t(s)\l\eta_n^ t(s),\varphi\r_1\d s\,\d  t.
\end{equation}
Concerning the first, the convergence to the corresponding one with $f(u)$
follows by observing that
$$
f(u_n) \xrightarrow{{\rm w}} f(u) \quad\text{in }L^2(\tau,T;\h).
$$
Indeed, by the growth condition \eqref{hp1-f} and Lemma \ref{lemma-bdd-n-1}
$$
\|f(u_n)\|\leq C\big(1+\|u_n\|^3_1\big)\leq \Q(R),
$$
and the result is a consequence of the Weak Dominated Convergence Theorem, in light
of the pointwise convergence~\eqref{lif}.

We are left to pass \eqref{dite} to the limit. To this aim, we
set
$$\bar u_{\tau n} = u_{\tau n}-u_\tau, \qquad \bar \eta _{\tau n} = \eta_{\tau n}-\eta_\tau,$$
and, for every $t \in [\tau,T]$,
\begin{align*}
\bar u_n(t) = u_n(t) - u(t), \qquad \bar\eta^t_n = \eta^t_n - \eta^t.
\end{align*}
Besides, we consider the map
$p_n : [\tau,T] \to \R$ defined as
$$t \mapsto p_n(t) = \l \bar u_n(t), \varphi\r_1,$$
In light of \eqref{conv-un-u},
\begin{equation}
\label{conv-w-st-pn}
p_n \xrightarrow{{\rm w}^*} 0
\quad\text{in } L^\infty(\tau,T).
\end{equation}
Writing explicitly $\bar\eta^t_n$
as
\begin{equation}
\label{repetabar}
\bar\eta^t_n(s) =
\begin{cases}
\bar u_n(t) - \bar u_n(t-s),                     &s \leq t- \tau,\\
\bar\eta_{\tau n}(s-t+\tau) + \bar u_n(t) - {\bar u}_{\tau n}, &s > t -\tau,
\end{cases}
\end{equation}
we have
\begin{align*}
&\int_\tau^{T}\zeta(t) \int_0^\infty\mu_t(s)\l\bar\eta^t_n(s),\varphi\r_1\d s\,\d t\\
&=\int_\tau^{T}\zeta(t)\kappa(t)p_n(t)\d t
-\int_\tau^{T}\zeta(t)\int_0^{t-\tau}\mu_t(s)p_n(t-s)\d s\,\d t\\
&\quad+ \int_\tau^{T}\zeta(t) \int_{t-\tau}^\infty\mu_t(s)\l\bar\eta_{\tau n}(s-t+\tau) - {\bar u}_{\tau n},\varphi\r_1\d s\,\d t.
\end{align*}
It is easy to see that the first term in the right-hand side goes to zero. Indeed, by {\bf (M2)} and \eqref{kappa-t-tau},
$$\int_\tau^{T}|\zeta(t)\kappa(t)|\d t
\leq C\kappa(\tau)\int_\tau^{T}K_\tau(t)\d t\leq C,$$
and \eqref{conv-w-st-pn} readily gives
$$\int_\tau^{T}\zeta(t)\kappa(t)p_n(t)\d t\to 0.$$
Concerning the second term,
an application of the Fubini Theorem yields
\begin{align*}
\int_\tau^{T}\zeta(t)\int_0^{t-\tau}\mu_t(s) p_n(t-s)\d s\,\d t
&=\int_\tau^{T} \zeta(t)\int_\tau^t\mu_t(t-s)p_n(s)\d s\,\d t \\
&=\int_\tau^{T} p_n(s)\int_s^{T}\mu_t(t-s)\zeta(t)\d t\,\d s.
\end{align*}
Appealing again to the Fubini Theorem
and exploiting {\bf (M2)}, we obtain
\begin{align*}
\int_\tau^{T} \left|\int_s^{T}\mu_t(t-s)\zeta(t)\d t\right|\d s
&\leq\int_\tau^{T}|\zeta(t)|\int_\tau^{t}\mu_t(t-s)\d s\,\d t\\
&\leq C\int_\tau^{T}\int_0^{t-\tau}\mu_t(s)\d s\,\d t\\
&\leq C\kappa(\tau)\int_\tau^{T}K_\tau(t)\d t \leq C,
\end{align*}
and \eqref{conv-w-st-pn} ensures the convergence
$$
\int_\tau^{T}\zeta(t)\int_0^{t-\tau}\mu_t(s)p_n(t-s)\d s\,\d t \to 0.
$$
Finally, recalling that $\mu_t$ is nonincreasing,
owing to {\bf (M2)} and using
\eqref{proj-un} and \eqref{proj-etan}, we draw
\begin{align*}
&\left|\int_\tau^{T}\zeta(t)\int_{t-\tau}^\infty\mu_t(s)\l\bar\eta_{\tau n}(s-t+\tau)
- {\bar u}_{\tau n},\varphi\r_1\d s\,\d t\right| \\
&\leq C\|\varphi\|_1 \int_\tau^{T}\int_{0}^\infty\mu_t(s+t-\tau)
\big[\|\bar\eta_{\tau n}(s)\|_1
+ \|{\bar u}_{\tau n}\|_1\big]\d s\,\d t \\
&\leq C\bigg[\int_{0}^\infty \mu_\tau(s)\|\bar\eta_{\tau n}(s)\|_1\d s
+ \kappa(\tau)\|\bar u_{\tau n}\|_1\bigg]\int_\tau^{T}K_\tau(t)\d t \\
\noalign{\vskip2mm}
&\leq C
\big[\sqrt{\kappa(\tau)}\|\bar\eta_{\tau n}\|_{\M_\tau}
+ \|\bar u_{\tau n}\|_1\big]\to 0.
\end{align*}
In summary,
$$
\int_\tau^{T} \zeta(t)\int_0^\infty \mu_t(s)\l \bar\eta^t_n(s),\varphi\r_1\d s\, \d t\to 0,
$$
which completes the proof of the claim.
\end{proof}

%%%%%%%%%%%%%%%%%%%%%%%%%%%%%%%%%%%%%%%%%%%%
\subsection{Regularity}
\label{SSREG}
We already know that $u\in L^\infty(\tau, T; \h^1)$, $\pt u\in L^\infty(\tau, T; \h)$,
$\eta^t\in\M_t$ for almost every $t\in [\tau,T]$.
In particular, $z(t)\in \H_t$ for almost every $t\in [\tau,T]$.
In order to comply with Definition \ref{def-sol}, we are left to verify that
$$\ptt u\in  L^1(\tau, T; \h^{-1}).$$
We need a useful observation.

\begin{lemma}
\label{lemma-L1}
Let $\sigma\in\R$ and $T>\tau\in \R$ be arbitrarily fixed.
If  $\eta\in L^\infty(\tau,T;\M^\sigma_\tau)$, then the map
$$t\mapsto\int_0^\infty\mu_t(s)A\eta^t(s)\d s\in L^1(\tau,T;H^{\sigma-1}).$$
\end{lemma}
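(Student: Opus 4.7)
The plan is to work out a pointwise (in $t$) bound for the $\h^{\sigma-1}$-norm of the integral and then integrate against the majorant provided by assumption \textbf{(M2)}.

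First, I would regard $A$ as a bounded operator $\h^{\sigma+1}\to\h^{\sigma-1}$, so that for almost every $t\in[\tau,T]$ the vector-valued integral $\int_0^\infty\mu_t(s)A\eta^t(s)\d s$ is well defined in $\h^{\sigma-1}$ and satisfies
$$
\left\|\int_0^\infty\mu_t(s)A\eta^t(s)\d s\right\|_{\sigma-1}
\leq \int_0^\infty\mu_t(s)\|A\eta^t(s)\|_{\sigma-1}\d s
= \int_0^\infty\mu_t(s)\|\eta^t(s)\|_{\sigma+1}\d s.
$$
Next, applying the Cauchy–Schwarz inequality in the measure $\mu_t(s)\d s$ yields
$$
\int_0^\infty\mu_t(s)\|\eta^t(s)\|_{\sigma+1}\d s
\leq \sqrt{\kappa(t)}\,\|\eta^t\|_{\M_t^\sigma}.
$$

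At this stage I would invoke \textbf{(M2)} twice: once in the form $\kappa(t)\leq K_\tau(t)\kappa(\tau)$ (cf.~\eqref{kappa-t-tau}) and once in the form $\|\eta^t\|^2_{\M_t^{\sigma}}\leq K_\tau(t)\|\eta^t\|^2_{\M_\tau^{\sigma}}$ (cf.~\eqref{norme-Mt-tau}), using the hypothesis $\eta\in L^\infty(\tau,T;\M_\tau^\sigma)$ in the second. Combining these estimates gives
$$
\left\|\int_0^\infty\mu_t(s)A\eta^t(s)\d s\right\|_{\sigma-1}
\leq \sqrt{\kappa(\tau)}\,K_\tau(t)\,\|\eta\|_{L^\infty(\tau,T;\M_\tau^{\sigma})}.
$$
Integrating in $t$ over $[\tau,T]$ and using the summability of $K_\tau$ on $[\tau,T]$, also built into \textbf{(M2)}, produces the desired $L^1$ bound.

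The only remaining issue is measurability of the map $t\mapsto \int_0^\infty\mu_t(s)A\eta^t(s)\d s$ as a function into $\h^{\sigma-1}$, which I expect to be the most delicate point (though still routine): I would check weak measurability by pairing with an arbitrary $\varphi\in\h^{1-\sigma}$, reducing to the scalar map $t\mapsto\int_0^\infty\mu_t(s)\langle \eta^t(s),\varphi\rangle_1\d s$; this is measurable by Fubini, given the joint measurability of $(t,s)\mapsto\mu_t(s)$ granted by \textbf{(M3)} and the measurability of $(t,s)\mapsto\eta^t(s)$. Since $\h^{\sigma-1}$ is separable, Pettis' theorem then upgrades weak measurability to strong measurability, and the pointwise bound above secures Bochner integrability.
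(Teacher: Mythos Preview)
Your argument is correct and essentially matches the paper's: both bound the $\h^{\sigma-1}$-norm by $\int_0^\infty\mu_t(s)\|\eta^t(s)\|_{\sigma+1}\d s$ and then combine Cauchy--Schwarz with {\bf (M2)} to reach the same majorant $\sqrt{\kappa(\tau)}\,K_\tau(t)\,\|\eta\|_{L^\infty(\tau,T;\M_\tau^\sigma)}$, the only cosmetic difference being that the paper applies {\bf (M2)} first (replacing $\mu_t$ by $K_\tau(t)\mu_\tau$) and then Cauchy--Schwarz in $\mu_\tau$, whereas you do Cauchy--Schwarz in $\mu_t$ first and then invoke {\bf (M2)} twice. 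Your added paragraph on measurability is a welcome bit of rigor that the paper simply omits.
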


\begin{proof}
A simple computation yields
\begin{align*}
\int_\tau^T \Big\|\int_0^\infty\mu_t(s)A\eta^t(s)\d s\Big\|_{\sigma-1}\d t
&\leq\int_\tau^T\int_0^\infty\mu_t(s)\|\eta^t(s)\|_{\sigma+1}\d s\,\d t\\
&\leq\int_\tau^T K_\tau(t)\int_0^\infty\mu_\tau(s)\|\eta^t(s)\|_{\sigma+1}\d s\,\d t\\
&\leq\sqrt{\kappa(\tau)}\|\eta\|_{L^\infty(\tau,T;\M^\sigma_\tau)}\int_\tau^T K_\tau(t)\d t,
\end{align*}
and the thesis follows from {\bf (M2)}.
\end{proof}

In light of \eqref{etataub}, by applying Lemma \ref{lemma-L1}
for $\sigma=0$, the claimed regularity for $\ptt u$ is obtained by comparison in \eqref{pttl}.
As a byproduct, we deduce the continuity
$$\pt u\in \C([\tau,T],\h^{-1}).$$
%%%%%%%%%%%%%%%%%%%%%%%%%%%%%%%%%%%%%%%%%%%%

\subsection{Initial values}
Here we show that the initial conditions are fulfilled, i.e.\
\begin{equation}
\label{in-cond-bis}
u(\tau) = u_\tau
\and
\pt u(\tau) = v_\tau.
\end{equation}
We take any $\varphi \in \h^1$ and $\zeta \in \C^2([\tau,T])$ satisfying
$\zeta(T)=\pt\zeta(T)=0$. Observing that
$$\int_\tau^T\zeta(t)\l\ptt u(t),\varphi\r\d t=-\zeta(\tau)\l\pt u(\tau),\varphi\r
+\dot\zeta(\tau)\l u(\tau),\varphi\r+\int_\tau^T\ddot\zeta(t)\l u(t),\varphi\r\d t,$$
we obtain
\begin{align}
\label{eqn-in}
&-\zeta(\tau)\l\pt u(\tau),\varphi\r
+\dot\zeta(\tau)\l u(\tau),\varphi\r
+\int_\tau^T\ddot\zeta(t)\l u(t),\varphi\r\d t
+ \int_\tau^T\zeta(t)\l u(t),\varphi\r_1\d t\\ \nonumber
&\quad
+\int_\tau^T\zeta(t)\int_0^\infty\mu_t(s)
\l\eta^t(s),\varphi\r_1\d s\,\d t
+ \int_\tau^T\zeta(t)\l f(u(t)),\varphi\r\d t\\ \nonumber
&= \int_\tau^T\zeta(t)\l g,\varphi\r\d t.
\end{align}
On the other hand, arguing in a similar manner with the approximate problem
\eqref{weak-eqn-approx},
\begin{align*}
&-\zeta(\tau)\l \pt u_n(\tau),\varphi\r
+\dot\zeta(\tau)\l u_n(\tau),\varphi\r
+\int_\tau^T\ddot\zeta(t)\l u_n(t),\varphi\r\d t
+ \int_\tau^T\zeta(t)\l u_n(t),\varphi\r_1\d t\\
&\quad
+\int_\tau^T\zeta(t)\int_0^\infty\mu_t(s)
\l\eta^t_n(s),\varphi\r_1\d s\,\d t
+ \int_\tau^T\zeta(t)\l f(u_n(t)),\varphi\r\d t\\
&= \int_\tau^T\zeta(t)\l g,\varphi\r\d t.
\end{align*}
Passing to the limit in the latter identity
and comparing the limiting equality with \eqref{eqn-in} yields
$$\zeta(\tau)\l \pt u(\tau),\varphi\r
-\dot\zeta(\tau)\l u(\tau),\varphi\r
=\zeta(\tau)\l v_\tau,\varphi\r
-\dot\zeta(\tau)\l u_\tau,\varphi\r.$$
Being $\zeta(\tau)$ and $\dot\zeta(\tau)$ arbitrarily chosen,
\eqref{in-cond-bis} holds.
\qed

%%%%%%%%%%%%%%%%%%%%%%%%%%%%%%%%%%%%%%%%%%%%
\subsection{Uniform estimates}
To complete the proof of Theorem \ref{thm-ex-un-wak}, we have to prove that
$z(t)\in\H_t$ for every $t\in[\tau,T]$ and
$$\sup_{t\in[\tau,T]}\|z(t)\|_{\H_t}\leq \Q(R),$$
whenever $\|z_\tau\|_{\H_\tau}\leq R$.
This is obtained by passing to the limit in the uniform estimate of Lemma \ref{lemma-bdd-n-1}.
Due to the convergence $(u_n,\pt u_n) \xrightarrow{{\rm w}}(u,\pt u)$ in $\h^1\times \h$ (at any fixed $t$),
together with the (weak) continuity $u \in \C([\tau, T], \h) \cap \C^1([\tau, T], \h^{-1})$, which allows us to select
the continuous representative in the equivalence classes of $u$ and $\pt u$,
we have that $(u(t),\pt u(t))\in \h^1\times \h$ for every $t\in [\tau, T]$ and
$$
\sup_{t\in[\tau,T]}\Big[\|u(t)\|_1+\|\pt u(t)\|\Big]\leq \Q(R).
$$
The only difficult part is showing that $\eta^t\in \M_t$ for every $t\in [\tau, T]$ and
$$
\sup_{t\in[\tau,T]}\|\eta^t\|_{\M_t}\leq \Q(R).
$$
For every fixed $t\in [\tau,T]$, Lemma \ref{lemma-bdd-n-1} provides the convergence (up to a subsequence)
$$\eta_n^t \xrightarrow{w} q^t  \quad\text{in } \M_t,$$
for some $q^t\in\M_t$. Accordingly,
$$\|q^t\|_{\M_t}\leq \liminf_{n\to\infty}\|\eta_n^t\|_{\M_t}\leq \Q(R).$$
Consequently, if we prove the equality
$q^t=\eta^t$ in $\M_t$ we are done. To see that, it is enough to show that
$$\eta_n^t\to \eta^t\quad\text{in }\M_t^{-1}.$$
But, since $\bar u_n\in \C([\tau,T],\h)$, this follows by applying Lemma~\ref{lemma-bd-eta}
and the subsequent Remark~\ref{rem-bd-eta} for $\sigma=-1$
to the difference
$\bar\eta_n^t=\eta_n^t-\eta^t$ given
by formula \eqref{repetabar}, yielding
$$
\|\bar \eta^t\|^2_{\M_t^{-1}}
\leq \Phi(\bar u_n,\bar\eta_{\tau n})K_\tau(t)\to 0.
$$
Indeed, $\bar u_n\to 0$ in $\C([\tau,T],\h)$ by \eqref{uco},
and $\bar\eta_{\tau n}\to 0$ in $\M_\tau$ by construction.
\qed

\smallskip
The proof of Theorem \ref{thm-ex-un-wak} is completed.
%%%%%%%%%%%%%%%%%%%%%%%%%%%%%%%%%%%%%%%%%%%%%%%%%

%%%%%%%%%%%%%%%%%%%%%%%%%%%%%%%%%%%%%%%%%%%%%%%%%
\section{Uniqueness}
\label{sec-unique}

\noindent
Uniqueness is an immediate consequence of the
following weak continuous dependence.

\begin{proposition}
\label{lemma-dep}
Let $z_1(t)=(u_1(t),\pt u_1(t),\eta_1^t)$ and $z_2(t)=(u_2(t),\pt u_2(t),\eta_2^t)$ be any two solutions on $[\tau,T]$.
There exists a positive constant $C$, depending only on $T,\tau$ and the size of the initial data in $\H_\tau$, such that
$$\| z_1(t) - z_2(t) \|_{\H_t^{-1}}
\leq C\| z_1(\tau) - z_2(\tau) \|_{\H_\tau}$$
for every $t \in [\tau,T]$.
\end{proposition}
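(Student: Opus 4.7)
Setting $\bar u = u_1 - u_2$ and $\bar \eta = \eta_1 - \eta_2$, I would subtract the weak formulations (iv) of Definition~\ref{def-sol} written for $z_1$ and $z_2$, obtaining
$$\l \ptt \bar u, \varphi\r + \l \bar u, \varphi\r_1 + \int_0^\infty \mu_t(s)\l \bar \eta^t(s), \varphi\r_1 \d s + \l f(u_1) - f(u_2), \varphi\r = 0, \quad \forall \varphi \in \h^1.$$
The natural multiplier is $\varphi = A^{-1}\pt \bar u(t)$, which lies in $\h^2 \subset \h^1$ for a.e.\ $t$, since $\pt \bar u \in L^\infty(\tau,T;\h)$. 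Exploiting the self-adjointness of $A$, this choice gives, at least formally,
$$\frac{1}{2}\ddt\bigl[\|\pt \bar u\|_{-1}^2 + \|\bar u\|^2\bigr] + \l \bar \eta, \pt \bar u\r_{\M_t^{-1}} + \l f(u_1) - f(u_2), A^{-1}\pt \bar u\r = 0.$$

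Integrating on $[\tau,t]$ and adding the key inequality of Theorem~\ref{theorem-eta-norm} applied at level $\sigma = -1$ to $\bar u$ and $\bar \eta$ (legitimate, since $\bar u \in W^{1,\infty}(\tau,T;\h)$ and $\bar \eta_\tau \in \M_\tau \subset \M_\tau^{-1}$), the cross terms $\l \bar \eta, \pt \bar u\r_{\M_t^{-1}}$ cancel exactly. With $\EE(t) = \|z_1(t) - z_2(t)\|_{\H_t^{-1}}^2$, this yields
$$\EE(t) \leq \EE(\tau) + M\int_\tau^t \EE(s)\d s + 2\int_\tau^t \bigl|\l f(u_1(s)) - f(u_2(s)), A^{-1}\pt \bar u(s)\r\bigr|\d s.$$
For the nonlinear term, the growth condition~\eqref{hp1-f}, the Sobolev embedding $\h^1 \subset L^6(\Omega)$ and the uniform $\h^1$-bound on $u_1, u_2$ from Theorem~\ref{thm-ex-un-wak} give, by a duality/H\"older argument, $\|f(u_1) - f(u_2)\|_{-1} \leq C_R\|\bar u\|$. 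Since $\|A^{-1}\pt \bar u\|_1 = \|\pt \bar u\|_{-1}$, Young's inequality dominates this term by $C_R\EE(s)$. Noting finally that $\EE(\tau) \leq C\|z_1(\tau) - z_2(\tau)\|_{\H_\tau}^2$ by the continuous embedding $\H_\tau \subset \H_\tau^{-1}$ (Poincar\'e together with the pointwise $\|\eta(s)\| \leq C\|\eta(s)\|_1$), the Gronwall lemma closes the estimate.

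The main technical hurdle is rigorously justifying the differential identity $\ddt \|\pt \bar u\|_{-1}^2 = 2\l \ptt \bar u, A^{-1}\pt \bar u\r$, given only $\pt \bar u \in L^\infty(\tau,T;\h)$ and $\ptt \bar u \in L^1(\tau,T;\h^{-1})$ as afforded by Definition~\ref{def-sol}. I would bypass this by a standard time-mollification of the difference equation: replacing $\bar u$ by its Friedrichs (or Steklov) regularization in $t$, the identity holds classically for the regularized objects, and one then passes to the limit in the resulting integral inequality via dominated convergence, exploiting the $L^1(\tau,T;\h^{-1})$-bound on $\ptt \bar u$ together with the continuity $\bar u \in \C([\tau,T],\h)$.
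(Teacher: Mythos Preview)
Your proposal is correct and follows essentially the same route as the paper: test the difference equation with $A^{-1}\pt\bar u$, integrate in time, add the key inequality of Theorem~\ref{theorem-eta-norm} at $\sigma=-1$ so that the cross terms cancel, estimate $\|f(u_1)-f(u_2)\|_{-1}\leq C\|\bar u\|$ via~\eqref{hp1-f} and the uniform $\h^1$-bound from Theorem~\ref{thm-ex-un-wak}, and close with Gronwall. Your additional remark about justifying the formal multiplication via time-mollification is a point the paper simply takes for granted.
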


\begin{proof}
Thanks to Theorem \ref{thm-ex-un-wak},
\begin{equation}
\label{u-bdd}
\sup_{t\in [\tau,T]}\big[\|u_1(t)\|_1+\|u_2(t)\|_1\big]\leq C,
\end{equation}
where here and along the proof, $C>0$ will stand for a generic constant
(possibly) depending on $T,\tau$ and the size of the initial data in $\H_\tau$.
For $t\in [\tau,T]$, we denote by
$$\bar{z}(t)=(\bar{u}(t),\pt\bar{u}(t),\bar{\eta}^t)=z_1(t)-z_2(t)$$
the difference of the two solutions, and we set
$$\Lambda(t)=\|\bar u(t)\|^2+\|\pt\bar u(t)\|^2_{-1}.$$
For every $\varphi \in \h^1$
and almost every $t \in [\tau,T]$ we have
\begin{equation}
\label{eq-mem-diff}
\l\ptt\bar u(t),\varphi\r+ \l\bar{u}(t),\varphi\r_1
+ \int_0^\infty \mu_t(s)\l\bar{\eta}^t(s),\varphi\r_1\d s
+ \l f(u_1(t))-f(u_2(t)),\varphi\r=0,
\end{equation}
with
$$
\bar\eta^t(s) =
\begin{cases}
\bar u(t) - \bar u(t-s),                     &s\leq t-\tau,\\
\bar\eta_\tau(s-t+\tau)+\bar u(t)-{\bar u}_{\tau}, &s> t-\tau.
\end{cases}
$$
Using $\varphi=A^{-1}\pt\bar u(t)$ as a test function in \eqref{eq-mem-diff},
we obtain
$$\ddt \Lambda+
2\int_0^\infty\mu_t(s)\l\bar\eta(s),\pt\bar u\r\d s
=2\l f(u_2)-f(u_1),\pt\bar u\r_{-1}.$$
Exploiting \eqref{hp1-f} and the uniform boundedness
\eqref{u-bdd},
we have the estimate
\begin{align*}
\|f(u_2)-f(u_1)\|_{-1}
&\leq C\| \big(1+|u_1|^2 + |u_2|^2\big)|\bar u| \|_{L^{6/5}(\Omega)} \\
&\leq C\big(1+\|u_1\|_1^2 + \|u_2\|_1^2)\|\bar u\|\\
&\leq C\|\bar u\|.
\end{align*}
Accordingly,
$$2\l f(u_2)-f(u_1),\pt\bar u\r_{-1}\leq C\|\bar u\|\|\pt\bar u\|_{-1}\leq C\Lambda,$$
and we arrive at
$$
\ddt \Lambda(t)+ 2\l \bar\eta^t,\pt\bar u(t)\r_{\M^{-1}_t}
\leq C\Lambda(t).$$
An integration on $[\tau,t]$, with $t\leq T$, entails
$$
\Lambda(t)+2\int_\tau^{t}\l \bar\eta^y,\pt\bar u(y)\r_{\M^{-1}_y}\d y
\leq \Lambda(\tau)+C\int_\tau^{t}\Lambda(y)\d y.
$$
Since $\bar u\in W^{1,\infty}(\tau,T;\h)$ by Theorem~\ref{thm-ex-un-wak}, we
can apply Theorem~\ref{theorem-eta-norm} to $\bar\eta$ for $\sigma=-1$, to get
$$
\|\bar\eta^{t}\|^2_{\M_{t}^{-1}}
- \|\bar\eta^\tau\|^2_{\M_\tau^{-1}}
\leq M\int_\tau^t \|\bar\eta^y\|^2_{\M_y^{-1}}\d y
+2\int_\tau^{t}\l\bar\eta^y,\pt \bar u(y)\r_{\M_y^{-1}}\d y.
$$
Adding the two inequalities, we end up with
$$
\|\bar z(t)\|^2_{\H_t^{-1}}
\leq \|\bar z(\tau)\|^2_{\H_\tau^{-1}}+C\int_\tau^{t}\|\bar z(y)\|^2_{\H_y^{-1}}\d y,
$$
and the conclusion follows from the Gronwall Lemma and the embedding
$\H_\tau \subset \H_\tau^{-1}$.
\end{proof}
%%%%%%%%%%%%%%%%%%%%%%%%%%%%%%%%%%%%%%%%%%%%%%%%%

%%%%%%%%%%%%%%%%%%%%%%%%%%%%%%%%%%%%%%%%%%%%%%%%%
\section{Time Continuity and Continuous Dependence}
\label{sec-contdep}

\noindent
To complete our program, we are left to prove the continuity in time of the
solution, along with the strong continuous dependence estimate of Theorem \ref{thm-cont-2}.
The proofs of both results are obtained by approximating the
solutions originating from fixed initial data in $\H_\tau$
with smoother solutions departing from more regular data.

\subsection{Two preliminary lemmas}
We begin to prove further regularity properties
of the solutions with initial data in $\H_\tau^1$.

\begin{lemma}
\label{thm-ex-un-wak-1}
If $z_\tau\in \H_\tau^1$ then
the (unique) solution $z(t)$ is uniformly bounded in $\H_t^1$ as $t\in[\tau,T]$,
and
$$
u \in \C([\tau, T], \h^1) \cap \C^1([\tau, T], \h).
$$
\end{lemma}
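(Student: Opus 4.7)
The plan is to rerun the Galerkin scheme of Section~\ref{sec-existence} at the higher regularity level $\sigma=1$, taking the basis $\{w_j\}$ to be eigenfunctions of $A$ (so that $\h_n$ is invariant under $A$ and the projector $P_n$ is orthogonal in every $\h^\sigma$). With $z_\tau\in\H_\tau^1$, the approximating data $z_{\tau n}=(P_n u_\tau,P_n v_\tau,P_n\eta_\tau)$ converge to $z_\tau$ in $\H_\tau^1$, and the Galerkin solution $u_n$ belongs to $W^{1,\infty}(\tau,T;\h^2)$ automatically (since $\h_n$ is finite-dimensional). Hence $A\pt u_n\in\h_n$ is an admissible test function in~\eqref{weak-eqn-approx}.

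Testing with $A\pt u_n$ yields the identity
$$
\ddt\Big[\tfrac12\|u_n\|_2^2+\tfrac12\|\pt u_n\|_1^2\Big]
+\l\eta_n^t,\pt u_n\r_{\M_t^1}
+\l f(u_n),A\pt u_n\r
=\l g,A\pt u_n\r.
$$
Since $g$ lies only in $\h$, the right-hand side cannot be controlled directly; instead we rewrite it as $\ddt\l g,Au_n\r$ and absorb it into the modified energy
$$\tilde E_n^1(t)=\tfrac12\|u_n(t)\|_2^2+\tfrac12\|\pt u_n(t)\|_1^2-\l g,Au_n(t)\r,$$
which, by Young's inequality, still controls $\|u_n\|_2^2+\|\pt u_n\|_1^2$ from both sides up to an additive constant. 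For the nonlinear term we use $\l f(u_n),A\pt u_n\r=\l f'(u_n)\nabla u_n,\nabla\pt u_n\r$; combining~\eqref{hp1-f}, the embedding $\h^1\subset L^6$, and the uniform control $\|u_n\|_1\le C$ delivered by Lemma~\ref{lemma-bdd-n-1}, we get $\|f'(u_n)\nabla u_n\|\le C(1+\|u_n\|_1^2)\|u_n\|_2\le C\|u_n\|_2$, whence $|\l f(u_n),A\pt u_n\r|\le C+C\tilde E_n^1$.

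Integrating on $[\tau,t]$ and adding the integral inequality produced by Theorem~\ref{theorem-eta-norm} at $\sigma=1$ (legitimate because $u_n\in W^{1,\infty}(\tau,T;\h^2)$ and $\eta_{\tau n}\in\M_\tau^1$), the cross terms $\pm 2\int_\tau^t\l\pt u_n(y),\eta_n^y\r_{\M_y^1}\d y$ cancel and Gronwall's Lemma gives
$$\sup_{t\in[\tau,T]}\|z_n(t)\|_{\H_t^1}\le\Q(\|z_\tau\|_{\H_\tau^1}).$$
Lower semicontinuity of the norms under weak-$*$ limits, together with uniqueness (Proposition~\ref{lemma-dep}) to identify the Galerkin limit with the solution of Theorem~\ref{thm-ex-un-wak}, transfers this bound to $z(t)$ itself. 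The identification $q^t=\eta^t$ of the weak $\M_t^1$-limit with the true history variable is done exactly as in the ``Uniform estimates'' subsection of Section~\ref{sec-existence}, via Lemma~\ref{lemma-bd-eta} applied at $\sigma=0$ to $\eta_n^t-\eta^t$.

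For the time continuity, $\pt u\in L^\infty(\tau,T;\h^1)\subset L^1(\tau,T;\h^1)$ implies $u\in W^{1,1}(\tau,T;\h^1)\subset\C([\tau,T],\h^1)$. Comparison in~\eqref{eqn-mem} shows that $Au$, $f(u)$, $g$, and, via Lemma~\ref{lemma-L1} at $\sigma=1$, the memory integral all belong to $L^1(\tau,T;\h)$; therefore $\ptt u\in L^1(\tau,T;\h)$ and $\pt u\in W^{1,1}(\tau,T;\h)\subset\C([\tau,T],\h)$. The main obstacle throughout is the a priori estimate at level $\sigma=1$: the low regularity of $g$ obstructs the naive energy argument, and the time-derivative trick together with the two-sided control of $\tilde E_n^1$ is the mechanism that closes it.
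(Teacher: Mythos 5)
Your proposal is correct and follows essentially the same route as the paper: a Galerkin estimate at level $\sigma=1$ obtained by testing with $A\pt u_n$, absorbing the forcing into a modified energy, invoking Theorem~\ref{theorem-eta-norm} with $\sigma=1$ plus Gronwall, and then recovering $\ptt u\in L^1(\tau,T;\h)$ by comparison via Lemma~\ref{lemma-L1}. The only (cosmetic) difference is in the nonlinear term: you integrate $\l f(u_n),A\pt u_n\r$ by parts in space and bound $\l f'(u_n)\nabla u_n,\nabla\pt u_n\r$, whereas the paper folds $2\l f(u),Au\r$ into the energy and bounds $\l f'(u)\pt u,Au\r$ --- both reduce to the same $L^3\cdot L^6\cdot L^2$ estimate using the uniform $\h^1$ bound.
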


\begin{proof}
Define the energy functionals
$$L(t) = \|u(t)\|^2_{1}+\|\pt u(t)\|^2 + 2\l f(u(t))-g, Au(t)\r$$
and
$$\L(t) = L(t) + \|\eta^t\|^2_{\M_t^1}.$$
Within the Galerkin approximation scheme, we test the equation by $\varphi=A\pt u$.
This gives
$$\ddt L
+ 2\l\eta,\pt u\r_{\M^1_t} = 2\l f'(u)\pt u, Au\r.$$
Since $\|u(t)\|_1$ is uniformly bounded by Theorem~\ref{thm-ex-un-wak},
owing to \eqref{hp1-f}
we find the controls
\begin{align*}
&\|f(u)\| \leq C\big(1 + \|u\|_1^3\big)\leq C,\\
&\|f'(u)\|_{L^3(\Omega)}\leq C\big(1 + \|u\|_1^2\big)\leq C,
\end{align*}
where, along this proof, $C$ denotes a positive constant depending on the size of $z_\tau$. From
the first inequality, we easily conclude that
\begin{equation}
\label{contr-E1-Z1-n}
\frac{1}{2}\|z(t)\|_{\H_t^1}^2-C\leq \L(t)\leq
2\|z(t)\|_{\H_t^1}^2+C.
\end{equation}
In turn, from the second inequality we deduce the estimate
$$
2\l f'(u)\pt u, Au\r
\leq 2\|f'(u)\|_{L^3(\Omega)}\|\pt u\|_{L^6(\Omega)}\|Au\|
\leq C\|\pt u\|_1\|u\|_2
\leq C+C\L,
$$
so obtaining
\begin{equation}
\label{pipa}
\ddt L
+ 2\l\eta,\pt u\r_{\M^1_t} \leq C+C\L.
\end{equation}
At this point, we apply Theorem~\ref{theorem-eta-norm} for $\sigma=1$,
and we get
$$\|\eta^t\|^2_{\M_t^1}\leq \|\eta^\tau\|^2_{\M_\tau^1} + M
\int_\tau^t\|\eta^y\|^2_{\M_y^1}\d y
+2\int_\tau^t \l\eta^y,\pt u\r_{\M_y^1}\d y.$$
Adding this inequality to~\eqref{pipa} integrated in time over $[\tau,t]$ for $t\leq T$,
on account of~\eqref{contr-E1-Z1-n}, we end up with
$$\L(t) \leq  \L(\tau)+C+C\int_\tau^t \L(y)\d y.$$
Then, the Gronwall Lemma together with a subsequent application of \eqref{contr-E1-Z1-n}
yield the desired estimate
$$
\sup_{t\in[\tau,T]}\|z(t)\|_{\H_t^1}\leq C.
$$
In particular,
$$
u \in L^\infty(\tau,T;\h^2)\cap W^{1,\infty}(\tau,T;\h^1)
\subset \C([\tau,T],\h^1).
$$
Besides, paralleling Remark~\ref{remetareggy} and Subsection~\ref{SSREG}, we
learn that $\eta \in L^\infty(\tau,T;\M^1_\tau)$, and
appealing to Lemma~\ref{lemma-L1} for $\sigma=1$ we draw
by comparison
$$\ptt u\in L^1(\tau,T;\h).$$
Hence,
$$\pt u\in L^\infty(\tau,T;\h^1)\cap W^{1,1}(\tau,T;\h)\subset \C([\tau,T],\h),$$
as claimed.
\end{proof}

\begin{lemma}
\label{stima-reg}
Let $z_1(t),z_2(t)$ be two solutions.
If $z_1(\tau),z_2(\tau)\in\H_\tau^1$, then
$$
\| z_1(t) - z_2(t) \|_{\H_t}
\leq C\| z_1(\tau) - z_2(\tau) \|_{\H_\tau}
$$
for every $t \in [\tau,T]$,
where the positive constant $C$, beside $\tau$ and $T$, depends (increasingly) only on
the norms of $z_1(\tau)$ and $z_2(\tau)$ in $\H_\tau$.
\end{lemma}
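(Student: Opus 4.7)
The plan is to mimic the structure of Proposition \ref{lemma-dep}, but to exploit the extra regularity of $z_1,z_2$ furnished by Lemma \ref{thm-ex-un-wak-1} in order to test the difference equation against $\pt\bar u$ directly (rather than against $A^{-1}\pt\bar u$, as in the weak case). By Lemma \ref{thm-ex-un-wak-1}, both $u_i \in L^\infty(\tau,T;\h^2)\cap W^{1,\infty}(\tau,T;\h^1)$ and $\|z_i(t)\|_{\H_t^1}\leq C$ uniformly, with $C$ depending only on $\tau,T$ and $\|z_i(\tau)\|_{\H_\tau}$. In particular $\bar u=u_1-u_2\in W^{1,\infty}(\tau,T;\h^1)$, which is exactly the assumption needed to apply Theorem~\ref{theorem-eta-norm} with $\sigma=0$ to $\bar\eta$.

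Set $\Lambda(t)=\|\bar u(t)\|_1^2+\|\pt\bar u(t)\|^2$. Testing the equation for the difference $\bar u$ with $\varphi=\pt\bar u(t)$ (valid thanks to the regularity above, modulo the usual Galerkin-type justification) yields
$$
\ddt\Lambda+2\l\bar\eta^t,\pt\bar u(t)\r_{\M_t}=2\l f(u_2(t))-f(u_1(t)),\pt\bar u(t)\r.
$$
The growth condition \eqref{hp1-f} combined with the uniform bounds $\|u_i(t)\|_1\leq C$ and the embedding $\h^1\subset L^6(\Omega)$ give the control
$$
\|f(u_1)-f(u_2)\|\leq C\big\|(1+|u_1|^2+|u_2|^2)|\bar u|\big\|\leq C\|\bar u\|_1,
$$
so that the right-hand side is bounded by $C\Lambda(t)$. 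Integrating on $[\tau,t]$ produces
$$
\Lambda(t)+2\int_\tau^t\l\bar\eta^y,\pt\bar u(y)\r_{\M_y}\d y
\leq \Lambda(\tau)+C\int_\tau^t\Lambda(y)\d y.
$$

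Now applying Theorem~\ref{theorem-eta-norm} with $\sigma=0$ to $\bar\eta$ we get
$$
\|\bar\eta^t\|_{\M_t}^2\leq\|\bar\eta^\tau\|_{\M_\tau}^2+M\int_\tau^t\|\bar\eta^y\|_{\M_y}^2\d y+2\int_\tau^t\l\pt\bar u(y),\bar\eta^y\r_{\M_y}\d y.
$$
Summing the two displays, the cross memory terms cancel exactly, leaving
$$
\|\bar z(t)\|_{\H_t}^2\leq\|\bar z(\tau)\|_{\H_\tau}^2+C\int_\tau^t\|\bar z(y)\|_{\H_y}^2\d y,
$$
and Gronwall's Lemma closes the argument. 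The main point to be careful about is the estimate of $\|f(u_1)-f(u_2)\|$ in $\h$ (rather than $\h^{-1}$): this is precisely the place where the hypothesis $z_i(\tau)\in\H_\tau^1$ is used through the uniform $\h^1$-bound on $u_i$, and it is what allows us to close the argument in the stronger norm $\H_t$ with only the $\H_\tau$-norm of the initial difference on the right-hand side. Everything else is a transparent variant of the weak continuous dependence of Proposition~\ref{lemma-dep}, with the key integral inequality of Theorem~\ref{theorem-eta-norm} again playing the role of the missing differential identity for $\eta$.
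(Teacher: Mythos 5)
Your proposal is correct and follows essentially the same route as the paper: test the difference equation with $\varphi=\pt\bar u$ (legitimate thanks to the $\H_t^1$-regularity from Lemma~\ref{thm-ex-un-wak-1}), bound $\l f(u_2)-f(u_1),\pt\bar u\r$ via the growth condition \eqref{hp1-f} and the uniform $\h^1$-bounds on $u_1,u_2$, then combine the integrated identity with Theorem~\ref{theorem-eta-norm} for $\sigma=0$ so the memory cross terms cancel, and conclude by Gronwall. No gaps.
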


\begin{proof}
We argue as in the proof of Proposition~\ref{lemma-dep}, the only
difference being that now we can use $\varphi=\pt\bar u\in \h^1$ as a test function in \eqref{eq-mem-diff}.
Accordingly, we obtain
$$\ddt(\|\bar u\|^2_1+\|\pt\bar u\|^2)
+2\int_0^\infty\mu_t(s)\l\bar\eta(s),\pt\bar u\r_1\d s
=2\l f(u_2)-f(u_1),\pt\bar u\r.$$
Leaning on \eqref{hp1-f} and exploiting the boundedness of $\|u_1\|_{1}$ and $\|u_2\|_{1}$, we estimate
\begin{align*}
2\l f(u_2)-f(u_1),\pt\bar u\r
\leq C\big(1+\|u_1\|^2_1+\|u_2\|^2_1\big)\|\bar u\|_1\|\pt\bar u\|
\leq C\big(\|\bar u\|^2_1+\|\pt\bar u\|^2\big),
\end{align*}
where $C$ depends on the size of the initial data in $\H_\tau$ only.
The conclusion follows as in the proof of Proposition~\ref{lemma-dep},
making use of Theorem~\ref{theorem-eta-norm} for
$\sigma=0$.
\end{proof}

\subsection{Approximating the solution}
Let $z_\tau\in\H_\tau$ be any fixed initial datum, and
let
$$z(t)=(u(t),\pt u(t),\eta^t)$$
be the unique solution satisfying $z(\tau)=z_\tau$.
Then, we choose a sequence $z_{\tau n}\in \H^1_\tau$ such that
$$z_{\tau n}\to z_\tau\quad\text{in } \H_\tau,$$
and we denote by
$$z_n(t)=(u_n(t),\pt u_n(t),\eta^t_n)$$
the corresponding sequence of solutions satisfying $z_n(\tau)=z_{\tau n}$.
For every $n\in\N$, we know from Lemma~\ref{thm-ex-un-wak-1} that
\begin{equation}
\label{unoZ}
u_n\in\C([\tau, T], \h^1) \cap \C^1([\tau, T], \h).
\end{equation}
Let now $t\in[\tau,T]$ be arbitrarily fixed. Proposition~\ref{lemma-dep} entails the strong convergence
\begin{equation}
\label{dueZ}
z_{n}(t) \to z(t) \quad\text{in }\H_t^{-1}.
\end{equation}
Besides, we claim that, up to a subsequence,
\begin{equation}
\label{treZ}
z_{n}(t) \xrightarrow{w} z(t) \quad\text{in }\H_t.
\end{equation}
Indeed, by Theorem~\ref{thm-ex-un-wak},
$z_n(t)$ is bounded in $\H_t$ with a bound independent of $n$
(for $z_{\tau n}$ is a bounded sequence in $\H_\tau$).
Accordingly, up to a subsequence, $z_n(t)$ has a weak limit in $\H_t$. Due to \eqref{dueZ},
such a limit equals $z(t)$.

\subsection{Conclusion of the proofs}
First, we prove the continuity of the solution in the phase space.

\begin{lemma}
The function $u$ satisfies
$$
u \in \C([\tau, T], \h^1) \cap \C^1([\tau, T], \h).
$$
\end{lemma}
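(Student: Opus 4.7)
The strategy is to exploit the smoother approximating solutions $z_n$, which by \eqref{unoZ} already enjoy the desired time continuity, and then transfer this regularity to $u$ via a strong convergence in $\C([\tau,T],\h^1) \cap \C^1([\tau,T],\h)$. The essential tool is Lemma~\ref{stima-reg}: crucially, the constant $C$ there depends on the initial data only through their $\H_\tau$-norms (not their $\H_\tau^1$-norms), which is what makes the approximation scheme work. Without this feature, the argument would break, since we have no control on $\|z_{\tau n}\|_{\H_\tau^1}$.

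\medskip

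Since $z_{\tau n}\to z_\tau$ in $\H_\tau$, the sequence $\{z_{\tau n}\}$ is bounded in $\H_\tau$. Applying Lemma~\ref{stima-reg} to an arbitrary pair $(z_n,z_m)$, we obtain a constant $C$ \emph{uniform} in $n,m$ such that
$$
\sup_{t\in[\tau,T]}\|z_n(t) - z_m(t)\|_{\H_t} \leq C\|z_{\tau n} - z_{\tau m}\|_{\H_\tau} \longrightarrow 0.
$$
Projecting on the first two components and recalling \eqref{unoZ}, we deduce that $\{u_n\}$ is Cauchy in $\C([\tau,T],\h^1)$ and $\{\pt u_n\}$ is Cauchy in $\C([\tau,T],\h)$. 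Calling $\tilde u$ and $\tilde v$ their respective limits, a passage to the limit in the integral identity $u_n(t) = u_n(\tau) + \int_\tau^t \pt u_n(y)\,\d y$, which holds in $\h$, yields $\tilde v = \pt \tilde u$. Consequently $\tilde u \in \C([\tau,T],\h^1) \cap \C^1([\tau,T],\h)$.

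\medskip

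It remains to identify $\tilde u$ with $u$. By \eqref{dueZ}, $z_n(t) \to z(t)$ in $\H_t^{-1}$ for every $t \in [\tau,T]$, so in particular $u_n(t) \to u(t)$ in $\h$ and $\pt u_n(t) \to \pt u(t)$ in $\h^{-1}$. On the other hand, the previous step furnishes $u_n(t) \to \tilde u(t)$ in $\h^1$ and $\pt u_n(t) \to \pt\tilde u(t)$ in $\h$, hence a fortiori in $\h$ and $\h^{-1}$. Uniqueness of limits forces $u(t) = \tilde u(t)$ and $\pt u(t) = \pt \tilde u(t)$ pointwise on $[\tau,T]$, so $\tilde u$ serves as the continuous representative of the equivalence class of $u$, and the conclusion follows. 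The only delicate point in the whole argument is ensuring the uniformity of the constant in Lemma~\ref{stima-reg}; once this is secured, the Cauchy property of $\{u_n\}$ and $\{\pt u_n\}$ drops out at once and the identification is immediate.
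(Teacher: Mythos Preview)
Your proof is correct and follows essentially the same approach as the paper: show via Lemma~\ref{stima-reg} that $\{(u_n,\pt u_n)\}$ is Cauchy in $\C([\tau,T],\h^1\times\h)$, then identify the limit with $(u,\pt u)$ using the weaker convergence~\eqref{dueZ}. The only cosmetic difference is that the paper bundles $(u_n,\pt u_n)$ into a single vector $w_n$ in the product space ${\rm W}=\h^1\times\h$, whereas you treat the two components separately and invoke the integral identity to check that the limit of $\pt u_n$ is indeed the derivative of the limit of $u_n$; the paper bypasses this step by identifying the pair directly with $w=(u,\pt u)$.
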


\begin{proof}
It is convenient to introduce the product spaces
$${\rm W}^{-1} =\h\times\h^{-1} \and {\rm W}=\h^1\times\h,$$
and set
$$w(t)=(u(t),\pt u(t))\and w_n(t)=(u_n(t),\pt u_n(t)).$$
In light of \eqref{unoZ},
$$w_n\in \C([\tau,T],{\rm W}).$$
Besides, for every $n,m\in\N$, by Lemma~\ref{stima-reg} we have in particular the inequality
$$\|w_n(t)-w_m(t)\|_{{\rm W}}\leq C \| z_{\tau n} - z_{\tau m} \|_{\H_\tau},\quad\forall t\in[\tau,T],$$
telling that $w_n$ is a Cauchy sequence in the space
$\C([\tau,T],{\rm W})$.
Hence, it converges to some $\chi\in \C([\tau,T],{\rm W})$.
At the same time, we see from~\eqref{dueZ} that $w_n(t) \to w(t)$ in ${\rm W}^{-1}$ for every $t$,
which yields the equality $\chi=w$. Thus $w\in \C([\tau,T],{\rm W})$.
\end{proof}

This finishes the proof of Theorem~\ref{thm-ex-un}.
With a similar argument, we establish the continuous dependence estimate
of Theorem \ref{thm-cont-2}.

\begin{proof}[Proof of Theorem \ref{thm-cont-2}]
Let $z_1(t),z_2(t)\in \H_t$ be two solutions, and let $z_{1n}(t),z_{2n}(t)\in\H^1_t$ be their
respective approximating sequences.
For an arbitrarily fixed $t\in[\tau,T]$, we know from \eqref{treZ} that
$$z_{1n}(t) \xrightarrow{{\rm w}} z_1(t),\,\,z_{2n}(t) \xrightarrow{{\rm w}} z_2(t) \quad\text{in }\H_t.$$
Thus, exploiting Lemma~\ref{stima-reg} and the semicontinuity of the norm
(observe that $C$ is independent of $n$),
\begin{align*}
\|z_1(t)-z_2(t)\|_{\H_t}&\leq \liminf_{n\to\infty} \|z_{1n}(t)-z_{2n}(t)\|_{\H_t}\\
&\leq
C\,\liminf_{n\to\infty}\|z_{1n}(\tau)-z_{2n}(\tau)\|_{\H_\tau}\\
&=C\|z_1(\tau)-z_2(\tau)\|_{\H_\tau},
\end{align*}
ending the proof.
\end{proof}
%%%%%%%%%%%%%%%%%%%%%%%%%%%%%%%%%%%%%%%

\bigskip

%%%%%%%%%%%%%%%%%%%%%%%%%%%%%%%%%%%%%%%
\section*{Appendix: A Rheological Model for Aging Viscoelastic Materials}

\theoremstyle{definition}
\newtheorem{remarkAPP}{Remark}[section]
\renewcommand{\theremarkAPP}{A.\arabic{remarkAPP}}
\setcounter{equation}{0}
\setcounter{subsection}{0}
\renewcommand{\theequation}{A.\arabic{equation}}

\noindent
As already mentioned in the Introduction, the rheological model usually employed
in the description of a standard viscoelastic solid consists of
a Hookean spring and a Newtonian dashpot in series with each other (the so-called Maxwell element)
in parallel with a lone spring. Here, the idea is to reproduce the effects of the material aging via a progressive
stiffening of the spring in the Maxwell component.
This will lead to a concrete realization of equation~\eqref{eqn-mem-ht}
for a particular kernel $h_t(\cdot)$, which will be shown to comply with our assumptions {\bf (M1)}-{\bf (M4)}.

\subsection*{I. The model}
We consider axial deformations of a linear homogeneous viscoelastic body
occupying a volume $\Omega\subset\R^3$ at rest.
Since the material is homogeneous, we can represent its mechanical behavior by means of the same rheological
model at every point $x\in\Omega$. In particular, all the physical parameters turn out to be
independent of spatial coordinates.
A typical example encompassed by our analysis is a viscoelastic specimen in the form of a rectilinear rod
deforming under the action of tensile forces applied to its ends.
The aging of the material will be translated by replacing the Hooke constant
of the spring in the Maxwell element with a nondecreasing positive function.
Precisely (see fig.\ $\!$2), we denote by $\textsf{K}>0$ the rigidity of the lone spring, whereas,
concerning the Maxwell component, we denote by $\gamma>0$ the viscosity of the damper and by
$\textsf{K}_0(t)$ the rigidity of the spring at time $t$, where the function $\textsf{K}_0\in\C^1(\R)$
is supposed to be nondecreasing and to satisfy the
``initial" condition
\begin{equation}
\label{inf-K0}
\lim_{t\to -\infty}\textsf{K}_0(t)= \beta>0.
\end{equation}

\due

\begin{remarkAPP}
\label{remmyAPP}
In light of our previous discussion, the most interesting case from a physical
point of view is when
\begin{equation}
\label{inf-K0INF}
\lim_{t\to\infty}\textsf{K}_0(t)=\infty,
\end{equation}
translating the fact that the spring in the Maxwell element
becomes completely rigid in the longtime, so that
the Kelvin-Voigt viscoelastic model is recovered.
\end{remarkAPP}

\subsection*{II. The constitutive equation}
A constitutive equation is a relation between the uniaxial
strain $\epsilon=\epsilon(x,t):\Omega \times \R \to \R^3$ and the tensile stress
$\sigma=\sigma(x,t):\Omega \times \R \to \R^3$ at each point $(x,t)$.
As usual, in a rheological framework these fields are assumed to be uniform in $\Omega$, hence their dependence
on $x$ will be omitted.
With reference to fig.\ $\!$2, it is convenient to denote by $\epsilon_0(t)$ and $\epsilon_1(t)$
the strains at time $t$ of the Maxwell spring and of the damper,
respectively.
Since the true (logarithmic) strain is additive, we get
\begin{equation}
\label{eps1-eps2}
\epsilon(t) = \epsilon_0(t) + \epsilon_1(t).
\end{equation}
Besides, let $\sigma_S(t)$ be the stress of the lone spring, and
$\sigma_M(t)$ the stress of the Maxwell component.
Due to the fact that the lone spring and the Maxwell element are in parallel, we have the relation
\begin{equation}
\label{sig1-sig2}
\sigma(t) = \sigma_S(t) + \sigma_M(t).
\end{equation}
Recalling that the material is homogeneous,
we now write the constitutive equations for each of the rheological elements.
For the lone spring, the Hooke law reads
\begin{equation}
\label{eps-sigma2}
\sigma_S(t) = \textsf{K}\epsilon(t).
\end{equation}
Concerning the Maxwell element, as the Hookean spring and the Newtonian damper are in series,
they are subject to the same stress, namely,
\begin{equation}
\label{eps1-sigma1}
\sigma_M(t) = \textsf{K}_0(t)\epsilon_0(t)=\gamma\dot\epsilon_1(t),
\end{equation}
where the {\it dot} stands for derivative with respect to time.
Substituting~\eqref{eps1-sigma1} into~\eqref{eps1-eps2}, we draw
the differential identity
$$
\gamma\dot\epsilon_1(t)+\textsf{K}_0(t)\epsilon_1(t)
=\textsf{K}_0(t)\epsilon(t),
$$
which, integrated on $[r,t]$, gives
$$\epsilon_1(t)=\epsilon_1(r)\e^{-\frac1\gamma\int_0^{t-r}\textsf{K}_0(t-y)\d y}
+\frac1\gamma\int_0^{t-r}\e^{-\frac1\gamma\int_0^s \textsf{K}_0(t-y)\d y}\textsf{K}_0(t-s)\epsilon(t-s)\d s.$$
On account of~\eqref{inf-K0}, for every fixed $t\in\R$ and $p\geq 0$,
\begin{equation}
\label{alpha-nonint}
0\leq \e^{-\frac1\gamma\int_0^p \textsf{K}_0(t-y)\d y}
\leq \e^{-\frac{\beta p}{\gamma}}.
\end{equation}
Thus, under the reasonable assumption that $\epsilon_1$ is uniformly bounded in the past,
letting $r\to-\infty$ we have
$$\epsilon_1(r)\e^{-\frac1\gamma\int_0^{t-r}\textsf{K}_0(t-y)\d y}\to 0,$$
and we conclude that
$$\epsilon_1(t)=\frac1\gamma
\int_0^{\infty}\e^{-\frac1\gamma\int_0^s \textsf{K}_0(t-y)\d y}\textsf{K}_0(t-s)\epsilon(t-s)\d s.$$
On the other hand,
making use of~\eqref{eps1-eps2}-\eqref{eps1-sigma1}, we can write
$\epsilon_1$ in terms of $\epsilon$ and $\sigma$ as
$$
\epsilon_1(t)=\bigg[1+\frac{\textsf{K}}{\textsf{K}_0(t)}\bigg]\epsilon(t) - \frac{\sigma(t)}{\textsf{K}_0(t)}.
$$
Collecting the two equalities above, we end up with
\begin{equation}
\label{sigmauno}
\sigma(t)= (\textsf{K}_0(t)+\textsf{K})\epsilon(t)-\frac{1}{\gamma}\textsf{K}_0(t)
\int_0^{\infty}\e^{-\frac1\gamma\int_0^s \textsf{K}_0(t-y)\d y}\textsf{K}_0(t-s)\epsilon(t-s)\d s.
\end{equation}
At this point, an integration by parts together with a further use of~\eqref{alpha-nonint},
assuming $\epsilon$ uniformly bounded in the past,
lead to
the integral-type constitutive equation
\begin{equation}
\label{sigmadue}
\sigma(t)
= \textsf{K}\epsilon(t)+\textsf{K}_0(t)\int_0^\infty\e^{-\frac1\gamma\int_0^s \textsf{K}_0(t-y)\d y}\dot\epsilon(t-s)\d s.
\end{equation}

\subsection*{III. Mechanical evolution of the body}
The final goal is to determine the kinematic equation of the viscoelastic body.
Denoting by
$u: \Omega \times \R \to \R$ the axial displacement field relative to the reference configuration $\Omega$,
the balance of linear momentum in Lagrangian coordinates reads
$$\varrho\ptt u = \nabla\cdot \sigma + \varrho \textsf{F},$$
where $\varrho$ is the reference density of the body and $\textsf{F}$ is an external force per unit mass.
Hence, from the explicit form~\eqref{sigmadue} of $\sigma$, and recalling that
$\epsilon$ is related to the
displacement as
$\epsilon=\nabla u$,
we obtain
\begin{equation}
\label{eqpreKV}
\ptt u- k_\infty\Delta u
-\int_0^\infty k_t(s)\Delta\pt u(t-s)\d s=\textsf{F},
\end{equation}
where we set
$$
k_\infty = \frac{\textsf{K}}{\varrho}
$$
and
$$
k_t(s)= \frac{1}{\varrho}
\textsf{K}_0(t)\e^{-\frac1\gamma\int_0^s \textsf{K}_0(t-y)\d y}.
$$
Equivalently, using~\eqref{sigmauno} in place of~\eqref{sigmadue},
$$
\ptt u- h_t(0)\Delta u
-\int_0^\infty h_t'(s)\Delta u(t-s)\d s= \textsf{F},
$$
with
$$h_t(s)=k_t(s)+k_\infty.$$
The original equation~\eqref{eqn-mem-ht} is then recovered when $\textsf{F}$ is a
displacement-dependent external force of the form
$\textsf{F}=g-f(u)$.

\begin{remarkAPP}
Observe that $k_t(\cdot)$ is convex. Indeed, for every fixed $t\in\R$,
$$ k_t''(s)
=\frac{\textsf{K}_0(t)}{\varrho\gamma}\bigg[\dot{\textsf{K}}_0(t-s)+
\frac1\gamma[\textsf{K}_0(t-s)]^2\bigg]\e^{-\frac1\gamma\int_0^s \textsf{K}_0(t-y)\d y}\geq 0,$$
where we are exploiting the fact that $\textsf{K}_0$ is nondecreasing.
Besides, owing to \eqref{alpha-nonint},
$$
\int_0^\infty k_t(s)\d s
\leq \frac{1}{\varrho}\textsf{K}_0(t)
\int_0^\infty\e^{-\frac{\beta s}{\gamma}}\d s
= \frac{\gamma}{\varrho\beta}\textsf{K}_0(t),
$$
proving that $k_t(\cdot)$ is summable.
\end{remarkAPP}

\begin{remarkAPP}
In the particular case when $\textsf{K}_0(t)=\beta$ for every $t\in\R$,
we recover the classical time-independent kernel
$$k(s)=\frac{\beta}{\varrho}
\e^{-\frac{\beta s}{\gamma}}$$
widely used in the modeling of (non-aging)
standard viscoelastic solids. See e.g.\ \cite{CHR,FM,RHN}.
\end{remarkAPP}

\subsection*{IV. Verifying the assumptions on the memory kernel}
We now show that the time-dependent memory kernel
$\mu_t(\cdot)= -k'_t(\cdot)=-h'_t(\cdot)$ given by
$$\mu_t(s)=\frac{1}{\varrho\gamma}
\textsf{K}_0(t)\textsf{K}_0(t-s)\e^{-\frac1\gamma\int_0^s \textsf{K}_0(t-y)\d y}
$$
complies with assumptions {\bf (M1)}-{\bf (M4)} of Section 2.

\medskip
\noindent
$\bullet$
Assumption {\bf (M1)} is fulfilled, for $k_t(\cdot)$ is convex and summable (hence vanishing at infinity).
In particular,
$$\kappa(t)=\int_0^\infty\mu_t(s)\d s = \frac{\textsf{K}_0(t)}{\varrho}.$$

\medskip
\noindent
$\bullet$ Assumption {\bf (M2)} is fulfilled with
$$K_\tau(t)=\frac1\beta\frac{[\textsf{K}_0(t)]^2}{\textsf{K}_0(\tau)}.$$
Indeed, let $t>\tau$. Since $\textsf{K}_0$ is nondecreasing and \eqref{inf-K0} holds,
$$
\mu_t(s)
\leq \frac{1}{\varrho\gamma}
\textsf{K}_0(t)\textsf{K}_0(t-s)\e^{-\frac1\gamma\int_0^s \textsf{K}_0(\tau-y)\d y}
= \frac{\textsf{K}_0(t)\textsf{K}_0(t-s)}{\textsf{K}_0(\tau)\textsf{K}_0(\tau-s)}\mu_\tau(s)
\leq K_\tau(t)\mu_\tau(s).
$$

\medskip
\noindent
$\bullet$ Assumption {\bf (M3)} is obviously true as $\textsf{K}_0\in \C^1(\R)$.
In particular,
$$\begin{aligned}
\dot\mu_t(s)&=
\frac{1}{\varrho\gamma}\bigg[\dot{\textsf{K}}_0(t)\textsf{K}_0(t-s)+ \textsf{K}_0(t)\dot{\textsf{K}}_0(t-s)\\
\noalign{\vskip1mm}
&\qquad\quad- \frac{1}{\gamma}[\textsf{K}_0(t)]^2 \textsf{K}_0(t-s)
+\frac{1}{\gamma}\textsf{K}_0(t)[\textsf{K}_0(t-s)]^2\bigg]\e^{-\frac1\gamma\int_0^s \textsf{K}_0(t-y)\d y}.
\end{aligned}$$

\medskip
\noindent
$\bullet$ Assumption {\bf (M4)} holds with
$$M(t)=\frac{\dot{\textsf{K}}_0(t)}{\textsf{K}_0(t)}.$$
Indeed,
$$\begin{aligned}
\dot\mu_t(s)+\mu'_t(s)
&= \frac{1}{\varrho\gamma}\bigg[\dot{\textsf{K}}_0(t)-\frac1\gamma[\textsf{K}_0(t)]^2\bigg]
\textsf{K}_0(t-s)\e^{-\frac1\gamma\int_0^s \textsf{K}_0(t-y)\d y}\\
\noalign{\vskip1mm}
&\leq \frac{1}{\varrho\gamma}\dot{\textsf{K}}_0(t)\textsf{K}_0(t-s)\e^{-\frac1\gamma\int_0^s \textsf{K}_0(t-y)\d y} \\
&= \frac{\dot{\textsf{K}}_0(t)}{\textsf{K}_0(t)}\mu_t(s).
\end{aligned}$$

\subsection*{V. Recovering Kelvin-Voigt}
The aim of this final subsection is to render Remark~\ref{remmyAPP} more rigorous.
Namely, we prove that within~\eqref{inf-K0}-\eqref{inf-K0INF}
the distributional convergence
$$k_t\to\frac{\gamma}{\varrho}\delta_0$$
generically occurs as $t\to\infty$, so that the equation with memory~\eqref{eqpreKV} collapses
into the Kelvin-Voigt viscoelastic model
$$\ptt u- k_\infty\Delta u
-\frac{\gamma}{\varrho}\Delta\pt u=\textsf{F}.
$$
More precisely, this will happen under the additional very mild assumption
\begin{equation}
\label{K0uno}
\lim_{t\to\infty}\frac{\dot{\textsf{K}}_0(t)}{[\textsf{K}_0(t)]^2}=0.
\end{equation}
This is always the case,
for instance, when $\textsf{K}_0$ is eventually concave down as $t\to\infty$.
Since the function $k_t(\cdot)$ is nonnegative for every $t$, our claim follows by showing that,
for every fixed $\nu\geq 0$,
$$\lim_{t\to\infty}\int_\nu^\infty k_t(s)\d s=
\begin{cases}
\gamma/\varrho &\text{if }\nu=0,\\
0 &\text{if }\nu>0.
\end{cases}
$$
To this end, introducing the antiderivative
$$
\textsf{H}(t)=\int_0^t \textsf{K}_0(y)\d y,
$$
let us write
$$
k_t(s)= \frac{1}{\varrho}
\textsf{K}_0(t)\e^{-\frac1\gamma\textsf{H}(t)}\e^{\frac1\gamma\textsf{H}(t-s)}
$$
and denote, for $t\geq \nu$,
$${\mathfrak I}_1(t)=\int_t^\infty k_t(s)\d s
\and
{\mathfrak I}_2(t)=\int_\nu^t k_t(s)\d s.$$

\smallskip
\noindent
$\bullet$ We first establish the convergence ${\mathfrak I}_1(t)\to 0$. Indeed, for $s\geq t$, we infer from \eqref{inf-K0}
that
$$\textsf{H}(t-s)\leq \beta (t-s).
$$
Accordingly,
$$\int_t^\infty \e^{\frac1\gamma\textsf{H}(t-s)}\d s
\leq \frac\gamma\beta,$$
which readily gives
$${\mathfrak I}_1(t)\leq  \frac{\gamma}{\varrho\beta}
\textsf{K}_0(t)\e^{-\frac1\gamma\textsf{H}(t)}.
$$
In order to reach the desired conclusion, we note that~\eqref{K0uno} implies that the nonnegative function
$$\textsf{Q}(t)=\textsf{K}_0(t)\e^{-\frac1\gamma\textsf{H}(t)}$$
is eventually decreasing, hence bounded at infinity.
Therefore, de l'H\^opital's rule and a further exploitation of~\eqref{K0uno} give
$$\lim_{t\to\infty}\textsf{Q}(t)=
\gamma \lim_{t\to\infty}
\frac {\dot{\textsf{K}}_0(t)}{\textsf{K}_0(t)\e^{\frac1\gamma\textsf{H}(t)}}
=\gamma \lim_{t\to\infty}
\frac {\dot{\textsf{K}}_0(t)}{[\textsf{K}_0(t)]^2}\,\textsf{Q}(t)=0.
$$

\smallskip
\noindent
$\bullet$ As far as ${\mathfrak I}_2(t)$ is concerned, we write
$${\mathfrak I}_2(t)=\frac{1}{\varrho}
\textsf{Q}(t)\int_0^{t-\nu} \e^{\frac1\gamma\textsf{H}(y)}\d y.
$$
Then, since we showed that $\textsf{Q}(t)\to 0$,
applying de l'H\^opital's rule and exploiting~\eqref{K0uno} we get
$$\lim_{t\to\infty}{\mathfrak I}_2(t)
=\frac1\varrho \lim_{t\to\infty}
\frac {\int_0^{t-\nu} \e^{\frac1\gamma\textsf{H}(y)}\d y}{\frac1{\textsf{Q}(t)}}
=\frac1\varrho \lim_{t\to\infty}
\frac {\e^{-\frac1\gamma [\textsf{H}(t)-\textsf{H}(t-\nu)]}}{\frac1\gamma-\frac{\dot{\textsf{K}}_0(t)}{[\textsf{K}_0(t)]^2}}
=\frac\gamma\varrho \lim_{t\to\infty}
\e^{-\frac1\gamma [\textsf{H}(t)-\textsf{H}(t-\nu)]}.
$$
The latter limit clearly equals $1$ when $\nu=0$, whereas when $\nu>0$
$$
\e^{-\frac1\gamma [\textsf{H}(t)-\textsf{H}(t-\nu)]}\to 0.$$
Indeed, recalling \eqref{inf-K0INF} and the fact that $\textsf{K}_0$ is nondecreasing,
$$\textsf{H}(t)-\textsf{H}(t-\nu)=\int_{t-\nu}^t \textsf{K}_0(y)\d y\geq \nu \textsf{K}_0(t-\nu)\to\infty.
$$
The claim is proven.

\begin{remarkAPP}
We point out that the function $k_t(\cdot)$ has an independent interest. Indeed, for $\varrho=\gamma$, it
provides an approximation (from the right) of the Dirac delta function, which does not seem to be
known in the literature.
\end{remarkAPP}
%%%%%%%%%%%%%%%%%%%%%%%%%%%%%%%%%%%%%%%

%%%%%%%%%%%%%%%%%%%%%%%%%%%%%%%%%%%%%%%

%%%%%%%%%%%%%%%%%%%%%%%%%%%%%%%%%%%%%%%

%%%%%%%%%%%%%%%%%%%%%%%%%%%%%%%%%%%%%%%
\end{document}